\documentclass[a4paper,12pt]{article}
\usepackage{amsmath}
\usepackage{amssymb}
\usepackage{amscd}
\usepackage{amsthm}
\usepackage{graphicx}
\usepackage{setspace}
\usepackage{bm}
\usepackage{amsthm}
\usepackage{pdfsync}
\newtheorem{theorem}{Theorem}[section]
\newtheorem{lemma}[theorem]{Lemma}
\newtheorem*{rtheorem}{Main Theorem}
\newtheorem{corollary}[theorem]{Corollary}

\theoremstyle{definition}

\newtheorem*{definition}{Definition}

\newtheorem*{remark}{Remark}

\def\PG{\mathrm{PG}} \def\AG{\mathrm{AG}}

\def\Aut{\mathrm{Aut}}
\def\PGammaL{\mathrm{P}\Gamma\mathrm{L}}
\def\PGL{\mathrm{PGL}}

\def\A{\mathcal{A}} \def\B{\mathcal{B}} \def\C{\mathcal{C}}
\def\D{\mathcal{D}}  
 \def\K{\mathcal{K}}
\def\L{\mathcal{L}}  
\def\Ll{\mathbb{L}}
\def\O{\mathcal{O}} \def\P{\mathcal{P}}
\def\H{\mathcal{H}}
\def\T{\mathcal{T}}
\def\R{\mathcal{R}} \def\S{\mathcal{S}}

\def\F{\mathbb{F}}

\title{Pseudo-ovals in even characteristic \\and ovoidal Laguerre planes}
\author{Sara Rottey \and Geertrui Van de Voorde \thanks{This author is a Postdoctoral Fellow of the Research Foundation -- Flanders (FWO -- Vlaanderen)}}
\begin{document}
\maketitle
\begin{abstract} Pseudo-arcs are the higher dimensional analogues of arcs in a projective plane: a {\em pseudo-arc} is a set $\A$ of $(n-1)$-spaces in $\PG(3n-1,q)$ such that any three span the whole space. Pseudo-arcs of size $q^n+1$ are called {\em pseudo-ovals}, while pseudo-arcs of size $q^n+2$ are called {\em pseudo-hyperovals}. A pseudo-arc is called {\em elementary} if it arises from applying field reduction to an arc in $\PG(2,q^n)$.

We explain the connection between dual pseudo-ovals and {\em elation Laguerre planes} and show that an elation Laguerre plane is {\em ovoidal} if and only if it arises from an elementary dual pseudo-oval. The main theorem of this paper shows that a pseudo-(hyper)oval in $\PG(3n-1,q)$, where $q$ is even and $n$ is prime, such that every element induces a Desarguesian spread, is elementary. As a corollary, we give a characterisation of certain ovoidal Laguerre planes in terms of the derived affine planes.
%
\end{abstract}

{\bf Keywords}: pseudo-ovals, pseudo-hyperovals, Desarguesian spreads, ovoidal Laguerre planes
\section{Introduction}
The aim of this paper is to characterise elementary pseudo-(hyper)ovals in $\PG(3n-1,q)$ where $q$ is even. We will impose a condition on the considered pseudo-ovals, namely that every element of the pseudo-oval induces a Desarguesian spread. In Subsection \ref{S2}, we provide the necessary background on pseudo-arcs and give some motivation for the study of this problem. In Subsection \ref{S1}, we will introduce Desarguesian spreads and field reduction and prove a theorem on the possible intersection of Desarguesian $(n-1)$-spreads in $\PG(2n-1,q)$. In Section \ref{S3}, we will explain the connection between dual pseudo-ovals and elation Laguerre planes, meanwhile proving a theorem that characterises ovoidal Laguerre planes as those elation Laguerre planes obtained from an elementary dual pseudo-oval. Finally, in Section \ref{S4}, we give a proof for our main theorem. 
We end by stating a corollary of our main theorem in terms of ovoidal Laguerre planes.

\subsection{Pseudo-arcs}\label{S2}
In this paper, all considered objects will be finite. Denote the $n$-dimensional projective space over the finite field $\F_q$ with $q$ elements, $q=p^h$, $p$ prime, by $\PG(n,q)$.

\begin{definition} A {\em pseudo-arc} is a set $\A$ of $(n-1)$-spaces in $\PG(3n-1,q)$ such that $\langle E_i,E_j\rangle \cap E_k=\emptyset$ for distinct $E_i,E_j,E_k$ in $\A$.
\end{definition}
We see that a pseudo-arc is a set of $(n-1)$-spaces such that any $3$ span $\PG(3n-1,q)$; such a set is also called a set of $(n-1)$-spaces in $\PG(3n-1,q)$ {\em in general position}. 

A {\em partial spread} in $\PG(2n-1,q)$ is a set of mutually disjoint $(n-1)$-spaces in $\PG(2n-1,q)$. Every element $E_i$ of a pseudo-arc $\A$ defines a partial spread $$\S_{i}:=\{ E_1,\ldots,E_{i-1},E_{i+1},\ldots,E_{\vert \A\vert}\}/E_i$$ in $\PG(2n-1,q)\cong \PG(3n-1,q)/E_i$ and we say that the element $E_i$ {\em induces} the partial spread $\S_i$.  Since an element $E_i$ induces a partial spread $\S_i$ in $\PG(2n-1,q)$, which has at most $q^n+1$ elements, a pseudo-arc in $\PG(3n-1,q)$ can have at most $q^n+2$ elements. Moreover, we have the following theorem of Thas, where a {\em pseudo-oval} in $\PG(3n-1,q)$ denotes a pseudo-arc of size $q^n+1$, and a {\em pseudo-hyperoval} denotes a pseudo-arc of size $q^n+2$. Note that for $n=1$, these statements reduce to well-known and easy to prove statements.
\begin{theorem}{\rm \cite{Thas PHO}}\label{Thas PHO} A pseudo-arc in $\PG(3n-1,q)$, $q$ odd, has at most $q^n+1$ elements. A pseudo-oval in $\PG(3n-1,q)$, $q$ even, is contained in a unique pseudo-hyperoval.
\end{theorem}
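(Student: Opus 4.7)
The plan is to analyse, for each element $E_i$ of a pseudo-arc $\A$, the partial spread $\S_i = \{E_j/E_i : j\neq i\}$ it induces in the quotient $\PG(3n-1,q)/E_i \cong \PG(2n-1,q)$, and to locate the ``tangent'' subspaces that any hypothetical extension is forced to lie in. First I would suppose $|\A|=q^n+1$, so that $|\S_i|=q^n$. A straightforward count shows that $\S_i$ covers exactly $q^n\cdot (q^n-1)/(q-1)$ points of $\PG(2n-1,q)$, leaving $(q^n-1)/(q-1)$ uncovered points: precisely the point-count of an $(n-1)$-space. One then needs to argue that these uncovered points in fact form an $(n-1)$-space $N_i$, so that the deficiency-one partial spread $\S_i$ extends uniquely to a spread. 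Pulling $N_i$ back through the projection from $E_i$ gives a $(2n-1)$-space $T_i \supset E_i$ that meets no other element of $\A$; this is the unique ``tangent $(2n-1)$-space'' to $\A$ at $E_i$.

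Next I would establish that any $(n-1)$-space $E^*$ extending $\A$ to a pseudo-arc is forced to lie in $\bigcap_i T_i$. Indeed, the pseudo-arc condition on $\A\cup\{E^*\}$ means that $\langle E^*,E_i\rangle/E_i$ is an $(n-1)$-space in $\PG(2n-1,q)$ disjoint from $\S_i$, so it must coincide with $N_i$; hence $\langle E^*,E_i\rangle = T_i$ and $E^*\subset T_i$. This reduces both parts of the theorem to a single geometric problem: determine the intersection of the $q^n+1$ tangent $(2n-1)$-spaces. If this intersection contains an $(n-1)$-space, that space is forced, and we obtain at most one extension; if it does not, no extension is possible.

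The final step — the parity-dependent part — is in spirit a higher-dimensional analogue of Segre's lemma of tangents for arcs in $\PG(2,q)$. I would fix three elements of $\A$ as a standard frame, coordinatise the remaining elements, and compute the tangent spaces explicitly. The aim is to show that a certain product or sum over the $q^n+1$ tangents either vanishes (when $q$ is even) or is nonzero (when $q$ is odd), forcing the tangents in the even case to share a common $(n-1)$-space $E^*$, and in the odd case to be in sufficiently general position. Once $E^*$ is produced in the even case, verifying that $\A\cup\{E^*\}$ is a pseudo-arc reduces to checking that $E^*$ is disjoint from every $E_i$ (immediate, since $E^*\subset T_i$ and $T_i\cap E_j=\emptyset$ for $j\neq i$, and for $i$ itself one uses that $E^*/E_i = N_i$ is disjoint from $E_i/E_i$ trivially), and that no three among $\A\cup\{E^*\}$ are in special position, which again follows from $E^*\subset T_i$.

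The main obstacle is the last paragraph: importing Segre's tangent identity into the pseudo-arc setting. The deficiency-one extension statement for partial spreads is by now standard, and the reduction to ``intersect all tangent spaces'' is formal; the genuinely hard content is the parity dichotomy for this intersection. I would expect that the cleanest approach parameterises the elements of $\A$ via $\F_{q^n}$-rational data (since any three elements together form a Desarguesian frame of $\PG(3n-1,q)$), reducing the identity to a sum in $\F_{q^n}$, and thereby making transparent the role of $\mathrm{char}(\F_q) = 2$ versus odd.
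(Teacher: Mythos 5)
This statement is quoted from Thas's 1971 paper and is not proved in the present article, so there is no internal proof to compare against; I therefore assess your proposal on its own terms. Your first two paragraphs are a correct and standard reduction: the hole-count $(q^n-1)/(q-1)$, the appeal to the deficiency-one extension theorem for partial spreads to get the ``tangent'' $(2n-1)$-space $T_i=\langle E_i,N_i\rangle$, and the observation that any extending element $E^*$ must satisfy $\langle E^*,E_i\rangle=T_i$ for every $i$ are all sound (modulo a small point you gloss over: if $\bigcap_i T_i$ has dimension larger than $n-1$, ``forced'' does not immediately give uniqueness).

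The genuine gap is the third step, which is the entire mathematical content of the theorem, and your proposed route to it rests on a false premise. You suggest parameterising the elements of $\A$ ``via $\F_{q^n}$-rational data (since any three elements together form a Desarguesian frame of $\PG(3n-1,q)$)'' and reducing the tangent identity to a sum in $\F_{q^n}$. Three mutually disjoint $(n-1)$-spaces in general position give a direct-sum decomposition $V(3n,q)=V_1\oplus V_2\oplus V_3$, but they do \emph{not} induce an $\F_{q^n}$-structure on the ambient space, and a general further element of $\A$ is described by $n\times n$ matrices over $\F_q$ (equivalently by a point of a Grassmannian), not by a pair of elements of $\F_{q^n}$. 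Assuming all elements of $\A$ admit $\F_{q^n}$-coordinates is tantamount to assuming the pseudo-arc is elementary --- precisely the open problem this paper addresses, and precisely what makes Thas's theorem nontrivial. The correct higher-dimensional lemma of tangents (as in Thas's original proof) is carried out at the level of matrix coordinates, with the role of the product of all nonzero field elements played by products of determinants of the matrices describing the tangent spaces; this is where the dichotomy $\mathrm{char}=2$ versus odd actually enters, and none of it is present in your sketch. As written, the proposal proves the formal reduction but not the theorem.
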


A pseudo-arc is called {\em elementary} if it arises by applying field reduction to an arc in $\PG(2,q^n)$. {\em Field reduction} is the concept where a point in $\PG(2,q^n)$ corresponds in a natural way to an $(n-1)$-space of $\PG(3n-1,q)$. The set of all points of $\PG(2,q^n)$ then correspond to a set of disjoint $(n-1)$-spaces partitioning $\PG(3n-1,q)$, forming a {\em Desarguesian spread}. For more information on field reduction and Desarguesian spreads we refer to \cite{FQ11}. A pseudo-oval that is obtained by applying field reduction to a conic in $\PG(2,q^n)$ is called a {\em pseudo-conic}. A pseudo-hyperoval (necessarily in even characteristic) obtained by applying field reduction to a conic, together with its nucleus, is called a {\em pseudo-hyperconic}.

All known pseudo-ovals and pseudo-hyperovals are elementary, but it is an open question whether there can exist non-elementary pseudo-ovals and pseudo-hyperovals. A natural question to ask is whether we can characterise a pseudo-oval in terms of the partial spreads induced by its elements.


From \cite{Beutelspacher}, we know that a partial spread of $\PG(2n-1,q)$ of size $q^n$ can be extended to a spread in a unique way, i.e. the set of points in $\PG(2n-1,q)$ not contained in an element of such a partial spread of size $q^n$, form an $(n-1)$-space. So by abuse of notation, we say that an element of a pseudo-oval induces a spread instead of a partial spread.  Clearly, for an elementary pseudo-oval every induced spread is Desarguesian.
The following theorem shows that for $q$ odd, a strong version of the converse also holds.
\begin{theorem}{\rm \cite{Casse}}
If $\O$ is a pseudo-oval in $\PG(3n-1,q)$, $q$ odd, such that for at least one element the induced spread is Desarguesian, then $\O$ is a pseudo-conic.
\end{theorem}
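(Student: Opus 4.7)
My plan is to show $\O$ is elementary (i.e.\ it arises by field reduction from a set of $q^n+1$ points in $\PG(2,q^n)$), so that $\O$ corresponds to an arc, hence an oval, in $\PG(2,q^n)$. Since $q$, and therefore $q^n$, is odd, Segre's theorem forces this oval to be a conic, making $\O$ a pseudo-conic. Thus the core task is to construct a Desarguesian spread $\D$ of $\PG(3n-1,q)$ with $\O\subseteq\D$.

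Let $E_0\in\O$ be the element inducing the Desarguesian spread $\S_0$ on $V:=\PG(3n-1,q)/E_0$. Via $\S_0$, identify $V$ with $\PG(\F_{q^n}^2)$; each projection $\tilde E_i:=(E_i+E_0)/E_0$ for $i\neq 0$ is then one of the $q^n+1$ $\F_{q^n}$-lines of $V$. My goal is to lift this $\F_{q^n}$-structure from $V$ to the ambient $\F_q^{3n}$ so that every element of $\O$ becomes an $\F_{q^n}$-line. Pick two further elements $E_1,E_2\in\O$: by the pseudo-arc property, $\F_q^{3n}=E_0\oplus E_1\oplus E_2$. Setting $W:=E_1\oplus E_2$ as a complement of $E_0$ and transporting the $\F_{q^n}$-structure of $V$ to $W$ makes $E_1,E_2$ into $\F_{q^n}$-lines. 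Using a fourth element $E_3\in\O$ --- the graph of an $\F_q$-isomorphism $\gamma_3:V_3\to E_0$, where $V_3\subset W$ projects to $\tilde E_3$ --- one defines the $\F_{q^n}$-structure on $E_0$ as the pushforward along $\gamma_3$; this makes $E_3$ an $\F_{q^n}$-line as well, and determines a Desarguesian spread $\D$ of $\PG(3n-1,q)$ containing $E_0,E_1,E_2,E_3$ and inducing $\S_0$ on $V$.

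The main obstacle is to prove that every remaining $E_i\in\O$ (for $i\geq 4$) automatically lies in $\D$. Writing $E_i$ as the graph of an $\F_q$-iso $\gamma_i:V_i\to E_0$ for an $\F_{q^n}$-line $V_i\subset W$, the condition $E_i\in\D$ is precisely that $\gamma_i$ be $\F_{q^n}$-linear for the chosen structures. The pseudo-arc property $\langle E_j,E_k\rangle\cap E_i=\emptyset$ for each triple of indices provides $\F_q$-linear incidence constraints relating the various $\gamma_i$, while $\S_0$ being Desarguesian encodes $\F_{q^n}$-linearity on the quotient. One must combine these and exploit the maximality $|\O|=q^n+1$ --- as well as the odd-characteristic hypothesis --- to propagate $\F_{q^n}$-linearity simultaneously to every $\gamma_i$. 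This combinatorial rigidity is the heart of the argument; once it is established, $\O\subseteq\D$ and the conclusion follows as in the first paragraph.
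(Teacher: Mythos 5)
There is a genuine gap, and it sits exactly where you place ``the heart of the argument.'' Your plan reduces the theorem to showing that every remaining element $E_i$ ($i\geq 4$) lies in the Desarguesian spread $\D$ determined by $E_0,E_1,E_2,E_3$ and $\S_0$, i.e.\ that each $\gamma_i$ is $\F_{q^n}$-linear; but you never exhibit a mechanism that forces this. The pseudo-arc conditions $\langle E_j,E_k\rangle\cap E_i=\emptyset$ only say that certain differences of the maps are invertible over $\F_q$, and the Desarguesian hypothesis on $\S_0$ only controls the quotient modulo $E_0$ --- neither gives $\F_{q^n}$-semilinearity, let alone linearity, of an individual $\gamma_i$. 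Asserting that ``combinatorial rigidity'' closes this is restating the theorem, not proving it.

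A structural reason to doubt that a purely linear-algebraic propagation exists: if the rigidity you invoke followed from the pseudo-arc constraints and $|\O|=q^n+1$ alone, the same argument would show that for $q$ even a pseudo-oval with one Desarguesian induced spread is elementary --- which is precisely the open problem (Problem A.3.4 in \cite{TGQ}) that motivates this paper, and which the paper only settles under the much stronger hypothesis that \emph{every} element induces a Desarguesian spread, with $n$ prime. So the odd-characteristic hypothesis must enter before your final appeal to Segre, not merely at the end. Indeed the actual proof of this theorem (Casse--Thas--Wild) does not proceed by direct spread construction: it passes to the associated Laguerre plane of odd order $q^n$, observes (as in Lemma \ref{derivation}) that the distinguished element gives a point with Desarguesian derived affine plane, and then invokes the Chen--Kaerlein theorem \cite{Chen} --- itself resting on Segre's theorem --- to conclude the Laguerre plane is Miquelian, whence $\O$ is a pseudo-conic by the analogue of Corollary \ref{Miquelian is conic}. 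That is where the deep input replaces the ``rigidity'' your outline leaves unproved.
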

The proof of this theorem relies on the theorem of Chen and Kaerlein \cite{Chen} for Laguerre planes in odd order, which in its turn relies on the theorem of Segre \cite{segre} characterising every oval in $\PG(2,q)$, $q$ odd, as a conic. This clearly rules out a similar approach for even characteristic. The characterisation of pseudo-ovals in terms of the induced spreads for even characteristic was posed as Problem A.3.4 in \cite{TGQ}.

In this paper, we will prove that the following holds:
\begin{rtheorem} If  $\O$ is a pseudo-oval in $\PG(3n-1,q)$, $q=2^h$, $h>1$, $n$ prime, such that the spread induced by every element of $\O$ is Desarguesian, then $\O$ is elementary.
\end{rtheorem}
As a corollary, we prove a similar statement for pseudo-hyperovals.
\begin{corollary} Let $\H$ be a pseudo-hyperoval in $\PG(3n-1,q)$, $q=2^h$, $h>1$, $n$ prime, such that the spread induced by at least $q^n+1$ elements of $\H$ is Desarguesian, then $\H$ is elementary.
\end{corollary}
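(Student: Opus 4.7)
The plan is to reduce the corollary to the Main Theorem by removing a single element of $\H$ and invoking the uniqueness statement in Theorem~\ref{Thas PHO}. Since $\H$ has $q^n+2$ elements and at least $q^n+1$ of them induce a Desarguesian spread, at most one element fails this property; let $E$ denote this element (or pick $E$ arbitrarily if no such element exists), and set $\O := \H\setminus\{E\}$. Then $\O$ is a set of $q^n+1$ mutually ``general'' $(n-1)$-spaces, hence a pseudo-oval in $\PG(3n-1,q)$.

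Next I would check that every element of $\O$ induces a Desarguesian spread in the sense used in the Main Theorem. For $E_i\in\O$, the quotient $\O/E_i$ is a partial spread of size $q^n$ in $\PG(3n-1,q)/E_i\cong\PG(2n-1,q)$, which by Beutelspacher's result (recalled in the introduction) extends uniquely to a spread. On the other hand $\H/E_i$ is already a spread of size $q^n+1$ containing $\O/E_i$, so this unique extension coincides with $\H/E_i$; by hypothesis the latter is Desarguesian for every $E_i\in\O$ (since $E$ has been removed). Hence the spread induced by each element of $\O$ is Desarguesian.

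Now I would apply the Main Theorem to $\O$: it yields that $\O$ is elementary, so $\O$ arises by field reduction from a conic $\C\subset\PG(2,q^n)$. Since $q^n$ is even, $\C$ is contained in a unique hyperoval $\C\cup\{N\}$, where $N$ is the nucleus of $\C$. Applying field reduction to $\C\cup\{N\}$ produces an elementary pseudo-hyperoval $\H'$ (a pseudo-hyperconic) that contains $\O$.

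Finally, Theorem~\ref{Thas PHO} guarantees that the pseudo-oval $\O$ is contained in a \emph{unique} pseudo-hyperoval in $\PG(3n-1,q)$. Since both $\H$ and $\H'$ are such extensions, we conclude $\H=\H'$, so $\H$ is elementary. The only subtle point — and what I would view as the main thing to verify carefully — is the matching of two notions of ``induced Desarguesian spread'' in Step~2: that the completion of the size-$q^n$ partial spread coming from $\O$ agrees with the size-$(q^n+1)$ spread coming from $\H$. This is immediate from Beutelspacher's uniqueness, and once it is in place the rest of the argument is a bookkeeping application of the Main Theorem and Thas's extension result.
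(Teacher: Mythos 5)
Your proof is correct and follows essentially the same route as the paper: restrict to the $q^n+1$ elements inducing Desarguesian spreads, check (via Beutelspacher's uniqueness) that the resulting pseudo-oval satisfies the hypothesis of the Main Theorem, and then use the uniqueness of the pseudo-hyperoval extension from Theorem \ref{Thas PHO}. The only slip is that the Main Theorem yields that $\O$ arises from an \emph{oval}, not necessarily a conic; since every oval of $\PG(2,q^n)$, $q^n$ even, still extends uniquely to a hyperoval by adjoining its nucleus, this does not affect the argument.
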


\subsection{Field reduction, Desarguesian spreads and Segre varieties}\label{S1}

We recall the {\em Andr\'e/Bruck-Bose representation} of a translation plane of order $q^n$. Let $\S$ be a $(n-1)$-spread of the projective space $\Sigma_\infty = \PG(2n-1,q)$ and embed $\Sigma_\infty$ as hyperplane of $\PG(2n,q)$.
Consider the following incidence structure ${\mathcal A}(\S)=(\P,\mathcal{L})$, where incidence is natural:
\begin{itemize}\setlength{\itemsep}{-1pt}
\item[$\mathcal{P}:$] the points of $\PG(2n,q)\setminus \Sigma_\infty$ (the affine points),
\item[$\mathcal{L}:$] the $n$-spaces of $\PG(2n,q)$ intersecting $\Sigma_\infty$ exactly in an element of $\S$.
\end{itemize}

This defines an affine translation plane of order $q^n$ \cite{Andre,Br}. If the spread $\S$ is Desarguesian, $\mathcal{A}(\S)$ is a Desarguesian affine plane $\AG(2,q^n)$. Adding $\Sigma_\infty$ as the line at infinity, and considering the spread elements as its points, we obtain a projective plane of order $q^n$. 

An {\em $(n-1)$-regulus} or {\em regulus} $\R$ in $\PG(2n-1,q)$ is a set of $q+1$ mutually disjoint $(n-1)$-spaces having the property that if a line meets $3$ elements of $\R$, then it meets all elements of $\R$. There is a unique regulus through $3$ mutually disjoint $(n-1)$-spaces $A,B$ and $C$ in $\PG(2n-1,q)$, let us denote this by $\R(A,B,C)$. Every Desarguesian spread $\D$ has the property that for $3$ elements $A,B,C$ in $\D$, the elements of $\R(A,B,C)$ are also contained in $\D$, i.e. $\D$ is {\em regular} (see also \cite{Br}). Moreover, every Desarguesian spread $\D$ clearly has the property that the space spanned by $2$ elements of $\D$ is partitioned by elements of $\D$, i.e. $\D$ is {\em normal}.

We will use the following notation for points of a projective space $\PG(r-1,q^n)$. A point $P$ of $\PG(r-1,q^n)$ defined by a vector $(x_1,x_2,\ldots,x_r)\in (\F_{q^n})^r$ is denoted by $\F_{q^n}(x_1,x_2,\ldots,x_r)$, reflecting the fact that every $\F_{q^n}$-multiple of $(x_1,x_2,\ldots,x_r)$ gives rise to the point $P$.

An {\em $\F_{q^t}$-subline} in $\PG(1,q^n)$, where $t|n$, is a set of $q^t+1$ points in $\PG(1,q^n)$ that is $\PGL$-equivalent to the set $\{\F_{q^n}(1,x)|x\in \F_{q^t}\}\cup \{\F_{q^n}(0,1)\}$. As $\PGL(2,q^n)$ acts sharply $3$-transitively on the points of the projective line, we see that any $3$ points define a unique $\F_{q^t}$-subline.


We can identify the vector space $(\F_{q})^{rn}$ with $(\F_{q^n})^r$, and hence, we can write every point of $\PG(rn-1,q)$ as $\F_q(x_1,x_2,\ldots,x_r)$, where $x_i\in \F_{q^n}$. In this way, by field reduction, a point $\F_{q^n}(x_1,x_2,\ldots,x_r)$ in $\PG(r-1,q^n)$ corresponds to the $(n-1)$-space $\F_{q^n}(x_1,x_2,\ldots,x_r)=\{\F_q(\alpha x_1,\alpha x_2,\ldots,\alpha x_r)|\alpha \in \F_{q^n}\}$ in $\PG(rn-1,q)$.

We will need a lemma on Desarguesian spreads which has a straightforward proof, but we include it for completeness.

\begin{lemma}\label{UniqueDesSpread}
Let $\D_1$ be a Desarguesian $(n-1)$-spread in a $(2n-1)$-dimensional subspace $\Pi$ of $\PG(3n-1,q)$, let $\mu$ be an element of $\D_1$ and let $E_1$ and $E_2$ be disjoint $(n-1)$-spaces disjoint from $\Pi$ such that $\langle E_1,E_2\rangle$ meets $\Pi$ exactly in the space $\mu$. Then there exists a unique Desarguesian $(n-1)$-spread of $\PG(3n-1,q)$ containing the elements of $\D_1$ and $\R(\mu,E_1,E_2)$.
\end{lemma}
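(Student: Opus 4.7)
The plan is to translate the statement into linear algebra via field reduction. Write $V$ for the $\F_q$-vector space with $\PG(V)=\PG(3n-1,q)$, and let $W,M,V_1,V_2\subset V$ be the subspaces giving $\Pi,\mu,E_1,E_2$. Recall that a Desarguesian $(n-1)$-spread of a projective space is equivalent to an $\F_{q^n}$-vector space structure on the underlying $\F_q$-space, the spread elements being the $1$-dimensional $\F_{q^n}$-subspaces; thus $\D_1$ equips $W$ with the structure of a $2$-dimensional $\F_{q^n}$-vector space in which $M$ is an $\F_{q^n}$-line. The disjointness hypotheses give $V=W\oplus V_1$, and the condition $\langle E_1,E_2\rangle\cap\Pi=\mu$ translates to $(V_1+V_2)\cap W=M$.

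For existence, the projection $V=W\oplus V_1\to V_1$ restricted to $V_2$ is injective (since $V_2\cap W=0$) and surjective by dimension, so $V_2=\{v+\phi(v):v\in V_1\}$ for a unique $\F_q$-linear $\phi:V_1\to W$; the conditions $V_1\cap V_2=0$ and $(V_1+V_2)\cap W=M$ force $\phi$ to be an $\F_q$-linear isomorphism $V_1\xrightarrow{\sim}M$. Transport the $\F_{q^n}$-structure from $M$ to $V_1$ along $\phi^{-1}$, turning $\phi$ into an $\F_{q^n}$-linear isomorphism; combined with the direct sum $V=W\oplus V_1$, this extends the $\F_{q^n}$-structure on $W$ uniquely to a $3$-dimensional $\F_{q^n}$-structure on $V$. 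The resulting Desarguesian spread $\D$ restricts to $\D_1$ on $\Pi$, contains $E_1=\PG(V_1)$, and contains $E_2=\PG(V_2)$ because $V_2$ is the graph of an $\F_{q^n}$-linear map; by regularity $\D$ also contains $\R(\mu,E_1,E_2)$.

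For uniqueness, any Desarguesian spread $\D'$ containing $\D_1\cup\R(\mu,E_1,E_2)$ corresponds to a subfield $F\subset\mathrm{End}_{\F_q}(V)$ isomorphic to $\F_{q^n}$, and $F$ preserves $W,V_1,V_2,M$. The restriction $F_W:=F|_W$ is the unique subfield of $\mathrm{End}_{\F_q}(W)$ realizing the $\F_{q^n}$-structure associated to $\D_1$. The requirement that $F$ preserve the graph $V_2$ then forces $F|_{V_1}=\phi^{-1}F_W|_M\phi$, and $F$ is the graph of the induced isomorphism $F_W\xrightarrow{\sim}F|_{V_1}$ inside $\mathrm{End}(W)\times\mathrm{End}(V_1)\subset\mathrm{End}(V)$. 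Hence $F$, and therefore $\D'$, equals the spread constructed above.

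The main subtle point lies in the uniqueness step: one must distinguish subfields of $\mathrm{End}_{\F_q}(W)$ from abstract embeddings $\F_{q^n}\hookrightarrow\mathrm{End}_{\F_q}(W)$, because different embeddings with the same image yield the same spread on $\Pi$, and it is the image subfield (not the embedding) that is intrinsically determined by $\D_1$. Once this is noted, the linear algebra forces both existence and uniqueness.
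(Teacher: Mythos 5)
Your proof is correct, but it takes a genuinely different route from the paper's. The paper argues with explicit field-reduction coordinates: it normalises $\D_1$ to the standard spread $\{\F_{q^n}(1,x)\mid x\in\F_{q^n}\}\cup\{\F_{q^n}(0,1)\}$ inside $\Pi$, uses the transversal lines of the regulus $\R(\mu,E_1,E_2)$ through points of $\mu$ to choose the remaining coordinates so that $E_1=\F_{q^n}(0,0,1)$ and $E_2=\F_{q^n}(0,1,1)$, exhibits the standard spread $\{\F_{q^n}(x_1,x_2,x_3)\}$ as the required extension, and obtains uniqueness from normality (every further element must be $\langle E_1,X\rangle\cap\langle E_2,Y\rangle$ with $X,Y\in\D_1$). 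You instead work coordinate-freely with the kernel field: existence by writing $V_2$ as the graph of an isomorphism $\phi:V_1\to M$ (your derivation that $\phi(V_1)=M$ from $(V_1+V_2)\cap W=M$ is correct) and transporting the $\F_{q^n}$-structure of $M$ to $V_1$ along $\phi$; uniqueness by showing that the subfield $F\subset\mathrm{End}_{\F_q}(V)$ attached to any admissible spread is forced on $W$ (it must be the kernel of $\D_1$, the unique subfield of order $q^n$ stabilising every element of $\D_1$) and then forced on $V_1$ by the requirement that $F$ stabilise the graph $V_2$, so that $F$ is the diagonal copy of $F_W$ in $\mathrm{End}(W)\times\mathrm{End}(V_1)$. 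Your version buys a cleaner and arguably more airtight uniqueness argument (the paper's appeal to normality is correct but terse), at the price of invoking the standard facts that a Desarguesian spread is exactly the set of rank-one submodules over its kernel and that this kernel has order exactly $q^n$; you correctly identify and handle the one subtle point, namely that it is the image subfield, rather than the embedding of $\F_{q^n}$, that is intrinsically determined by the spread.
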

\begin{proof} Since $\D_1$ is a Desarguesian spread in $\Pi$, we can choose coordinates for $\Pi$ such that $\D_1=\{\F_{q^n}(1,x)|x\in \F_{q^n}\}\cup \{\mu=\F_{q^n}(0,1)\}$. We embed $\Pi$ in $\PG(3n-1,q)$ by mapping a point $\F_q(x_1,x_2)$, $x_1,x_2\in \F_{q^n}$, of $\Pi$ to $\F_q(x_1,x_2,0)$. Consider a point $P$ of $\mu$ and let $\ell_P$ denote the unique transversal line through the point $P$ of $\mu$ to the regulus $\R(\mu,E_1,E_2)$.

We can still choose coordinates for $n+1$ points in general position in $\PG(3n-1,q)\setminus \Pi$. We will choose these $n+1$ points such that $n$ of them belong to $E_1$ and one of them belongs to $E_2$.
Consider a set $\{y_i|i=1,\ldots,n\}$ forming a basis of $\F_{q^n}$ over $\F_q$. We may assume that the line $\ell_{P_i}$ through $P_i=\F_q(0,y_i,0)$ meets $E_1$ in the point $\F_q(0,0,y_i)$. It follows that $E_1=\F_{q^n}(0,0,1)$. Moreover, we may assume that $\ell_Q$ with $Q=\F_q(0,\sum_{i=1}^n y_i,0)$ meets $E_2$ in $\F_q(0,\sum_{i=1}^n y_i,\sum_{i=1}^n y_i)$. Since $\F_q(0,\sum_{i=1}^n y_i,\sum_{i=1}^n y_i)$ has to be in the space spanned by the intersection points $R_i = \ell_{P_i} \cap E_2$, it follows that $R_i=\F_q(0,y_i,y_i)$ and consequently, that $E_2=\F_{q^n}(0,1,1)$.

It is clear that the Desarguesian spread $\D=\{\F_{q^n}(x_1,x_2,x_3)|x_1,x_2,x_3\in \F_{q^n}\}$ contains the spread $\D_1$ and the regulus $\R(\mu,E_1,E_2)$. Moreover, since a Desarguesian spread is normal, every element of $\D$, not in $\langle E_1,E_2\rangle$ is obtained as the intersection of $\langle E_1,X\rangle\cap \langle E_2,Y\rangle$, where $X,Y\in \D_1$, it is clear that $\D$ is the unique Desarguesian spread satisfying our hypothesis.
\end{proof}

\begin{theorem} \label{moeilijk} A set $\S$ of at least $3$ points in $\PG(1,q^n)$, $q>2$, such that any three points of $\S$ determine a subline entirely contained in $\S$, defines an $\F_{q^t}$-subline $\PG(1,q^n)$ for some $t|n$.
\end{theorem}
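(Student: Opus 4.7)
The plan is to reduce the theorem to an algebraic statement about subfields of $\F_{q^n}$. Using the sharp $3$-transitivity of $\PGL(2,q^n)$, after applying a projectivity to any three points of $\S$ we may assume $\S$ contains $\infty := \F_{q^n}(0,1)$, $0 := \F_{q^n}(1,0)$ and $1 := \F_{q^n}(1,1)$. Identifying each further point $\F_{q^n}(1,x)$ with $x \in \F_{q^n}$, define $K := \{x \in \F_{q^n} : \F_{q^n}(1,x) \in \S\}$, so that $0,1 \in K$ and $\S = K \cup \{\infty\}$. It suffices to show that $K$ is a subfield of $\F_{q^n}$: any subfield containing $\F_q$ equals $\F_{q^t}$ for a unique $t \mid n$, and $\F_{q^t} \cup \{\infty\}$ is by definition an $\F_{q^t}$-subline.

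\emph{Additive structure.} For any $a, b \in K$, the $\F_q$-subline through $\{\infty, a, b\}$ equals $\{a + t(b - a) : t \in \F_q\} \cup \{\infty\}$ and by hypothesis lies in $\S$, so $K$ is closed under $\F_q$-affine combinations of any two of its elements. Specialising $a = 0$ gives $\F_q \cdot K \subseteq K$; combining this with affine combinations (iterating on two or three elements, and using $q > 2$ to handle the characteristic $2$ case) yields that $K$ is closed under arbitrary $\F_q$-linear combinations, so $K$ is an $\F_q$-linear subspace of $\F_{q^n}$ containing $1$.

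\emph{Closure under squaring.} A direct computation of the M\"obius transformation sending $(\infty, 0, 1)$ to $(0, a, b)$ with $a, b \in K \setminus \{0\}$ distinct shows that the $\F_q$-subline through $\{0, a, b\}$ equals $\{ab / ((a - b)t + b) : t \in \F_q\} \cup \{0\}$, so $ab / u \in K$ for every $u$ on the $\F_q$-affine line $b + \F_q (a - b)$. Given $b \in K \setminus \F_q$, fix $t \in \F_q \setminus \{0, 1\}$ (possible since $q > 2$) and set $a := b + (1 - b)/t$. Then $a$ is an $\F_q$-affine combination of $1$ and $b$, hence $a \in K$; one checks that $a \neq b$, $a \neq 0$, and $(a - b)t + b = 1$, so that $ab \in K$. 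Writing $\lambda = 1/t$, expansion gives $ab = \lambda b + (1 - \lambda) b^2$; subtracting $\lambda b \in K$ and rescaling by the nonzero element $(1 - \lambda)^{-1}$ of $\F_q$ yields $b^2 \in K$. The remaining case $b \in \F_q$ is trivial.

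\emph{Closure under multiplication.} In characteristic different from $2$, the identity $(a + b)^2 = a^2 + 2ab + b^2$ combined with $\F_q$-linearity and the squaring closure immediately gives $2ab \in K$ and hence $ab \in K$, so $K$ is a subring of $\F_{q^n}$ and therefore a subfield. In characteristic $2$ this identity collapses, and this is the principal obstacle of the proof. The plan here is to reuse the subline-through-$0$ construction with the affine line $b + \F_q(a - b)$ aimed at targets other than $1$, and to combine the bilinear identities so obtained (together with the Frobenius closure $x \mapsto x^q$ that follows from iterating the squaring map) to extract $bc \in K$ for $c \in K$ outside the $\F_q$-span of $\{1, b\}$. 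Once multiplicative closure is established, $K$ is a subring of the finite field $\F_{q^n}$ and hence a subfield $\F_{q^t}$ for some $t \mid n$, and $\S = \F_{q^t} \cup \{\infty\}$ is the desired $\F_{q^t}$-subline.
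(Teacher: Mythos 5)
Your setup, the additive/\(\F_q\)-subspace step, and the derivation of \(b^2\in K\) from the subline through \(0,a,b\) all match the paper's argument. But the proof has a genuine gap exactly where the paper needs it most: in characteristic \(2\) (the only case relevant to the rest of the paper, since \(q=2^h\)) you do not prove multiplicative closure — you only state a ``plan'' to combine bilinear identities and Frobenius closure, without carrying out any computation. As written, the argument establishes that \(K\) is an \(\F_q\)-subspace closed under squaring, which in characteristic \(2\) only makes \(K\) closed under the Frobenius \(x\mapsto x^2\); that is far from forcing \(K\) to be a subfield, so the theorem is not proved in the even case.

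The fix is already latent in your own computation, and it is how the paper proceeds. Instead of aiming the subline through \(0,a,b\) only at the target \(1\), keep the general conclusion: for suitable \(t\in\F_q\) with \(t(t-1)\neq 0\) (here \(q>2\) is used) one gets the stronger claim that \(x^2/y\in K\) for all nonzero \(x,y\in K\), not merely \(x^2\in K\). With this in hand the even case closes cleanly: writing \(q^n=2^m\), every \(v\in K\setminus\{0\}\) satisfies \(v=u^2\) with \(u=v^{2^{m-1}}\), and \(u\in K\) because \(K\) is closed under squaring (iterate claim (2)); then \(v/w=u^2/w\in K\) by the claim, and \(vw=v^2/(v/w)\in K\) by the claim again. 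So \(K\) is a subring, hence a subfield \(\F_{q^t}\) with \(t\mid n\). I recommend you replace the sketched paragraph with this two-line deduction; without it the proof is incomplete.
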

\begin{proof} 
Without loss of generality, we may choose the points $\F_{q^n}(0,1)$, $\F_{q^n}(1,0)$ and $\F_{q^n}(1,1)$ to be in $\S$. Put $S=\{x \ |\ \F_{q^n}(1,x)\in \S\}$, clearly $\F_q\subseteq S$.

Consider $x,y \in S$, where $x\neq y$ and $xy\neq 0$, then every point of the $\F_q$-subline through the distinct points $\F_{q^n}(0,1)$, $\F_{q^n}(1,x)$ and $\F_{q^n}(1,y)$ has to be contained in $\S$. The points of this subline, different from $\F_{q^n}(0,1)$ are given by $\F_{q^n}(1,x+(y-x)t)$, where $t\in \F_q$. This implies that if $x$ and $y$ are in $S$, also $(1-t)x+ty$ is in $S$ for all $t\in \F_q$. It easily follows that $S$ is closed under taking linear combinations with elements of $\F_q$, hence, $S$ forms an $\F_q$-subspace of $\F_{q^n}$.
%
%


Now consider $x',y'\in S$, $x',y'\neq 0$. We claim that (1) $x'^2/y'\in S$ and (2) $x'^2\in S$.

 If $y'/x'\in \F_q$, our claim (1) immediately follows from the fact that $S$ is an $\F_q$-subspace so we may assume that $y'/x'\in \F_{q^n}\setminus\F_q$. Since $q>2$, we can consider an element $t\in\F_q$ such that $t(t-1)\neq 0$. Put $z':=y'-(t-1)x'$. Since $S$ is an $\F_q$-subspace, $z'\in S$. It is easy to check that $z'\notin\{0,x'\}$. Every point of the $\F_q$-subline containing distinct points $\F_{q^n}(1,0)$, $\F_{q^n}(1,x')$ and $\F_{q^n}(1,z')$ has to be contained in $\S$, and the points of this subline, different from $\F_{q^n}(1,z')$, are given by $\F_{q^n}(z'-x'+t'x',tx'z')$, where $t'\in \F_q$. This implies that  $\frac{t'x'z'}{z'+(t'-1)x'}$ is in $S$ for every $t'\in \F_q$, so also for $t'=t$, which implies that   $tx'-\frac{t(t-1)x'^2}{y'}\in S$. Since $tx' \in S$ and $t(t-1)\neq 0$, we conclude that $\frac{x'^2}{y'} \in S$ which proves claim (1). Claim (2) follows immediately from Claim (1) by taking $y=1\in \F_q\subseteq S$.
%
%

Now let  $v,w\in S$ and first suppose that $q$ is odd, then $vw=\frac{1}{2}((v+w)^2-v^2-w^2)$, and since $S$ is an $\F_q$-subspace and by claim (2), all terms on the right hand side are in $S$, so is $vw$. If $q$ is even, say $q^n=2^h$, then  $v=u^2$ for some $u\in \F_{q^n}$, but since $u=u^{2^h}=v^{2^{h-1}}$, $v$ is contained in $S$. This implies that $\frac{v}{w}=\frac{u^2}{w}\in S$ by claim (1) and consequently, again by claim (1), $vw=\frac{v^2}{v/w}\in S$. In both cases, we get that $S$ is a subfield of $\F_{q^n}$ and the statement follows.
\end{proof}

\begin{corollary}\label{regpluseen}Let $\D_1$ and $\D_2$ be two Desarguesian $(n-1)$-spreads in $\PG(2n-1,q)$, $q=p^h$, $p$ prime, $q>2$, with at least $3$ elements in common, then $\D_1$ and $\D_2$ share exactly $q^t+1$ elements for some $t|n$. In particular, if $n$ is prime, then $\D_1$ and $\D_2$ share a regulus or coincide.
\end{corollary}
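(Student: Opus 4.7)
The plan is to apply Theorem \ref{moeilijk} to the intersection $\D_1\cap\D_2$, after translating the setup into the projective line $\PG(1,q^n)$ via field reduction. Under field reduction the elements of the Desarguesian spread $\D_1$ are in natural bijection with the points of $\PG(1,q^n)$, and under this identification the reguli lying in $\D_1$ correspond precisely to the $\F_q$-sublines of $\PG(1,q^n)$ (three mutually disjoint elements of $\D_1$ determine a unique regulus $\R(A,B,C)$, which by regularity is contained in $\D_1$, and the three corresponding points determine a unique $\F_q$-subline). This dictionary is what I would rely on throughout.

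The key observation is then that the set of common elements $\S:=\D_1\cap\D_2$, viewed inside $\PG(1,q^n)$, is closed under taking $\F_q$-sublines through any three of its points. Indeed, for distinct $A,B,C\in\S$ the unique regulus $\R(A,B,C)$ lies in $\D_1$ because $\D_1$ is regular, and for the same reason it lies in $\D_2$; hence $\R(A,B,C)\subseteq\S$, which in $\PG(1,q^n)$ exactly says that the $\F_q$-subline through the three corresponding points is contained in $\S$.

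Since $|\S|\geq 3$ and $q>2$, Theorem \ref{moeilijk} applies and yields that $\S$ is an $\F_{q^t}$-subline of $\PG(1,q^n)$ for some $t\mid n$. In particular $|\D_1\cap\D_2|=q^t+1$, giving the first assertion. When $n$ is prime only $t=1$ or $t=n$ is possible, so $\S$ is either a regulus of size $q+1$ or has $q^n+1$ elements and therefore coincides with both $\D_1$ and $\D_2$.

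The only step with any real content is making the regulus/$\F_q$-subline correspondence rigorous; once that identification is in place, Theorem \ref{moeilijk} supplies everything else and no further case analysis or computation is required. I would expect the writing-up of that correspondence (essentially a coordinate choice analogous to the one carried out in Lemma \ref{UniqueDesSpread}) to be the main, though entirely routine, obstacle.
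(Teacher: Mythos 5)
Your proposal is correct and follows exactly the paper's own argument: both use regularity of $\D_1$ and $\D_2$ to show the intersection is closed under reguli, translate reguli to $\F_q$-sublines in $\PG(1,q^n)$ via field reduction, and then apply Theorem \ref{moeilijk}, with the prime case following from $t\in\{1,n\}$.
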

\begin{proof} Let $X$ be the set of common elements of $\D_1$ and $\D_2$. Since a Desarguesian spread $\D$ is regular, it has to contain the regulus defined by any three elements of $\D$, which, since $\D_1$ and $\D_2$ are Desarguesian, implies that the regulus through $3$ elements of $X$ is contained in $X$. Now since $X$ is contained in a Desarguesian spread, $X$ corresponds to a set of points $\S$ in $\PG(1,q^n)$ such that every $\F_q$-subline through $3$ points of $\S$ is contained in $\S$. The first part of the statement now follows from Theorem \ref{moeilijk}. The second part follows from the fact that the only divisors of a prime $n$ are $1$ and $n$.
\end{proof}

An {\em $\F_{q}$-subplane} of $\PG(2,q^n)$, is a subgeometry $\PG(2,q)$ of $\PG(2,q^n)$, i.e. a set of $q^{2}+q+1$ points and $q^{2}+q+1$ lines in $\PG(2,q^n)$ forming an projective plane, where the point set is $\PGL$-equivalent to the set $\{\F_{q^n}(x_0,x_1,x_2)|(x_0,x_1,x_2)\in (\F_{q}\times\F_{q}\times\F_{q})\setminus(0,0,0)\}$. If we apply field reduction to the point set of an $\F_q$-subplane, we find a set $\S$ of $q^2+q+1$ elements of a Desarguesian spread $\D$. All elements of $\S$ meet a fixed plane of $\PG(3n-1,q)$ and form one system of a Segre variety $\mathbf{S}_{n-1,2}$ (see e.g. \cite{FQ11}). Note that $\mathbf{S}_{n-1,2}$ is contained in $\PG(3n-1,q)$ and consists of two systems of subspaces, one with subspaces of dimension $(n-1)$ and the other consisting of planes. Moreover, every point of $\mathbf{S}_{n-1,2}$ lies on exactly one subspace of each system.

As $\PGL(3,q^n)$ acts sharply transitively on the frames of $\PG(2,q^n)$, we see that $4$ points in general position define a unique $\F_q$-subplane of $\PG(2,q^n)$. A similar statement holds for $4$ $(n-1)$-spaces in $\PG(3n-1,q)$ in general position. A proof can be found in e.g. \cite[Proposition 2.1, Corollary 2.3, Proposition 2.4]{corrado}.

\begin{lemma} \label{vier} Four $(n-1)$-spaces in $\PG(3n-1,q)$ in general position are contained in a unique Segre variety $\mathbf{S}_{n-1,2}$. \end{lemma}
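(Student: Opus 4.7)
The plan is to reduce to the analogous statement in $\PG(2,q^n)$: four points in general position form a frame, and hence determine a unique $\F_q$-subplane. Under field reduction the $q^2+q+1$ points of an $\F_q$-subplane become exactly one system of a Segre variety $\mathbf{S}_{n-1,2}$, so the task splits into an existence part (finding some Segre variety through the four $(n-1)$-spaces) and a uniqueness part.

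For existence, I would normalise by a projectivity of $\PG(3n-1,q)$. Writing $U_i\subset V=\F_q^{3n}$ for the $n$-dimensional $\F_q$-subspace underlying $E_i$, general position of $E_1,E_2,E_3$ gives $V=U_1\oplus U_2\oplus U_3$, while general position of $E_4$ with each pair from this triple means the projection $\pi_k:U_4\to U_k$ along $U_i\oplus U_j$ (with $\{i,j,k\}=\{1,2,3\}$) is an $\F_q$-linear isomorphism. Fixing a basis $w_1,\ldots,w_n$ of $U_4$ and using $\pi_k(w_1),\ldots,\pi_k(w_n)$ as a basis of $U_k$, one obtains coordinates in which $U_4=\{(x,x,x):x\in\F_q^n\}$. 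Identifying $\F_q^n\cong\F_{q^n}$ via any field structure then realises $E_1,\ldots,E_4$ as the field reduction of the standard frame $\F_{q^n}(1,0,0),\F_{q^n}(0,1,0),\F_{q^n}(0,0,1),\F_{q^n}(1,1,1)$ of $\PG(2,q^n)$; the field reduction of the $\F_q$-subplane $\{\F_{q^n}(a_0,a_1,a_2):a_i\in\F_q\}$ through this frame supplies the first system of a Segre variety containing the four given spaces.

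For uniqueness, suppose $\mathbf{S},\mathbf{S}'$ each contain $E_1,\ldots,E_4$ in their first system. Since all Segre varieties $\mathbf{S}_{n-1,2}$ in $\PG(3n-1,q)$ are projectively equivalent, I choose $g\in\PGL(3n,q)$ with $g(\mathbf{S}')=\mathbf{S}$. The subgroup $\PGL(3,q)\leq\Aut(\mathbf{S})$ acting on the first system via the parameterisation by $\PG(2,q)$ is sharply transitive on ordered frames, so after multiplying $g$ by a suitable automorphism of $\mathbf{S}$ one may arrange $g(E_i)=E_i$ for each $i$. Then $g$ lies in the pointwise stabiliser of $(E_1,\ldots,E_4)$ in $\PGL(3n,q)$, which in the canonical coordinates above is precisely $\{\mathrm{diag}(A,A,A):A\in\GL(n,q)\}/\F_q^{*}$. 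Such a transformation sends the first-system space $\{(a_0 x,a_1 x,a_2 x):x\in\F_q^n\}$ to itself via the substitution $y=Ax$, since each $a_i\in\F_q$ commutes with $A$. Hence $g$ fixes the first system of $\mathbf{S}$ pointwise, forcing $\mathbf{S}$ and $\mathbf{S}'$ to share their first systems; as the point set of an $\mathbf{S}_{n-1,2}$ is the union of its first system, this yields $\mathbf{S}=\mathbf{S}'$.

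The main obstacle is the uniqueness argument, specifically arranging that the projectivity $g$ both maps $\mathbf{S}'$ to $\mathbf{S}$ and fixes the given 4-tuple. Once that reduction is in hand, the commutation of the scalars $a_i\in\F_q$ with every $A\in\GL(n,q)$ makes the remaining verification straightforward, and this is exactly the algebraic content ensuring that the first-system spaces are intrinsic to $(E_1,\ldots,E_4)$, independent of any ancillary choice of Desarguesian spread used to define them.
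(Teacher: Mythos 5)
The paper gives no proof of Lemma \ref{vier} at all --- it defers to \cite{corrado} --- so any self-contained argument is already a different route. Your proof is essentially the natural direct one and its mathematical content is sound: normalise the four spaces to the field-reduced standard frame, and observe that the system $\Sigma=\bigl\{\{(a_0x,a_1x,a_2x):x\in\F_q^n\}\mid (a_0:a_1:a_2)\in\PG(2,q)\bigr\}$ is intrinsic to $(E_1,\ldots,E_4)$ because the residual freedom in the normalisation is exactly $\mathrm{diag}(A,A,A)$ with $A\in\GL(n,q)$, and the scalars $a_i\in\F_q$ commute with every such $A$. The existence half is fine as written.

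The uniqueness half, however, is slightly circular as presented. You conclude that $g$ ``fixes the first system of $\mathbf{S}$ pointwise'' from the fact that $\mathrm{diag}(A,A,A)$ preserves each space of $\Sigma$; but this presupposes that the first system of the \emph{arbitrary} Segre variety $\mathbf{S}$ (not merely the one you constructed) equals $\Sigma$ in your canonical coordinates, which is not established beforehand. The fix is to run your normalisation on $\mathbf{S}$ itself: choose a tensor decomposition $V=A\otimes B$ realising $\mathbf{S}$ with $E_i=a_i\otimes B$, normalise the frame $a_1,\ldots,a_4$ in $\PG(A)$, and note that the resulting coordinates are exactly of your canonical type, so that the first system of $\mathbf{S}$ is $\Sigma$ by the commutation argument. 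Once this is said, the same applies verbatim to $\mathbf{S}'$, both first systems equal $\Sigma$, and $\mathbf{S}=\bigcup\Sigma=\mathbf{S}'$; the entire detour through $g$ and sharp transitivity on frames becomes superfluous. Two smaller caveats: ``contained in'' must be read as ``belonging to the system of $(n-1)$-spaces'' (which is how the lemma is used in Theorem \ref{case1}), since for instance when $n=2$ a line lying inside a plane of the second system is also contained in the point set of the variety and point-set uniqueness is a different question; and for $n=3$ one should note that the factor-swap symmetry of $\mathbf{S}_{2,2}$ lets you assume the $E_i$ sit in the $(n-1)$-system of both varieties.
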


\section{Laguerre planes}\label{S3}
\begin{definition}
A {\em Laguerre plane} is an incidence structure with points $\P$, lines $\L$ and circles $\C$ such that $(\P,\L,\C)$ satisfies the following four axioms:

\begin{itemize}\setlength{\itemsep}{-1pt}
\item[AX1] Every point lies on a unique line.
\item[AX2] A circle and a line meet in a unique point.
\item[AX3] Through $3$ points, no two collinear, there is a unique circle of $\C$.
\item[AX4] If $P$ is a point on a fixed circle $C$ and $Q$ a point, not on the line through $P$ and not on the circle $C$, then there is a unique circle $C'$ through $P$ and $Q$, meeting $C$ only in the point $P$.
\end{itemize}
\end{definition}

In a finite Laguerre plane, every circle contains $s+1$ points for some $s$; this constant $s$ is called the {\em order} of the Laguerre plane.

Starting from a point $P$ of a Laguerre plane $\Ll=(\P,\L,\C)$, we obtain an affine plane $(\P',\L')$, where incidence is inherited from $\Ll$, as follows.

\begin{itemize}\setlength{\itemsep}{-1pt}
\item[$\P':$] the points of $\P$, different from $P$ and not collinear with $P$,
\item[$\L':$] (1) the lines of $\L$ not through $P$,\\
(2) the elements of $\C$ through $P$.
\end{itemize}
The obtained affine plane $(\P',\L')$ is called the {\em derived affine plane} at $P$.

\begin{definition}
A finite {\em ovoidal} Laguerre plane with points $\P$, lines $\L$ and circles $\C$ is a Laguerre plane that can be constructed from a cone $\K$ as follows. Consider a cone $\K$ in $\PG(3,q)$ with vertex the point $V$ and base an oval in a plane $H$, not containing $V$. Incidence is natural.
\begin{itemize}\setlength{\itemsep}{-1pt}
\item[$\mathcal{P}:$] the points of $\K\setminus\{V\}$,
\item[$\mathcal{L}:$] the {\em generators} of $\K$, i.e. the lines of $\PG(3,q)$, lying on $\K$,
\item[$\mathcal{C}:$] the plane sections of $\K$, not containing $V$.
\end{itemize}
\end{definition}

For later use, we will consider the dual model in $\PG(3,q)$ of the definition of an ovoidal Laguerre plane obtained from the cone $\K$ with vertex $V$ and base an oval $A$, embedded in $\PG(3,q)$. Let $H$ denote the plane which is the dual of the point $V$ in $\PG(3,q)$. Let $\overline{A}$ denote the dual (in $\PG(2,q)$) of the oval $A$ contained in $H$. It is not hard to see that we find the following incidence structure $(\P, \L,\C)$:
\begin{itemize}\setlength{\itemsep}{-1pt}
\item[$\P:$] planes different from $H$ and meeting $H$ in a line of $\overline{A}$,
\item[$\L:$] the lines in $H$ belonging to $\overline{A}$,
\item[$\C:$] the points of $\PG(3,q)$ not contained in $H$ (the affine points).
\end{itemize} 
We will denote the ovoidal Laguerre plane that is obtained in this way by $L(\overline{A})$.

\begin{definition}
The {\em classical} Laguerre plane of order $q$ is an ovoidal Laguerre plane, obtained from a quadratic cone $\K$ in $\PG(3,q)$, i.e. a cone whose base is a conic.
\end{definition}
\begin{remark}
A Laguerre plane is called {\em Miquelian} if for each eight pairwise different points $A,B,C,D,E, F,G,H$ it follows from $(ABCD)$,
$(ABEF)$, $(BCFG)$, $(CDGH)$, $(ADEH)$ that $(EFGH)$, where $(PQRS)$ denotes that $P,Q,R,S$ are on a common circle. By a theorem of van der Waerden and Smid a Laguerre plane is Miquelian if and only if it is classical \cite{vs} and we, as well as many others, use the term `Miquelian Laguerre plane' instead of `classical Laguerre plane'.
\end{remark}
It follows from Segre's theorem that an ovoidal Laguerre plane of odd order is necessarily Miquelian.

For later use, we will also introduce the {\em plane model} of the Miquelian Laguerre plane of even order $q$ (for more information we refer to \cite{LaguerrePlane}). Consider a point $N$ in $\PG(2,q)$, $q$ even. Since three points together with a nucleus determine a unique conic, one can easily count that there are exactly $q^{3}-q^{2}$ conics in $\PG(2,q)$, $q$ even, all having the same point $N$ as their nucleus. The plane model of the Miquelian Laguerre plane is the following incidence structure $(\P, \L, \C)$ embedded in $\PG(2,q)$, $q$ even, with natural incidence.
\begin{itemize}\setlength{\itemsep}{-1pt}
\item[$\P:$] the points of $\PG(2,q)$ different from $N$,
\item[$\L:$] the lines of $\PG(2,q)$ containing $N$,
\item[$\C:$] the $q^{2}$ lines of $\PG(2,q)$ not containing to $N$ and the $q^{3}-q^{2}$ conics in $\PG(2,q)$ having $N$ as their nucleus.
\end{itemize}
\begin{remark}
One can easily deduce this model from the standard cone model obtained from a quadratic cone $\K$ with vertex $V$ and base a conic $\C$ by projecting the cone $\K$ from a point on the line through $V$ and the nucleus of $\C$ on a plane.
\end{remark}

The {\em kernel} $K$ of a Laguerre plane $\Ll$ is the subgroup of $\Aut(\Ll)$ consisting of all automorphisms which map a point $P$ onto a point collinear with $P$, for every point $P$ of $\Ll$. In other words, $K$ is the elementwise stabiliser of lines of $\Ll$.
\begin{lemma}{\rm (see e.g. \cite[Theorem 1]{Steinke})} \label{St}The order of the kernel $K$ of a Laguerre plane $\Ll$ of order $s$ divides $s^3(s-1)$. Moreover, $|K|=s^3(s-1)$ if and only if $\Ll$ is ovoidal.
\end{lemma}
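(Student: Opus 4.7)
The plan is to prove the divisibility by a chain of orbit--stabiliser reductions, and then to treat the equality case separately. Since $K$ preserves each line of $\Ll$ setwise, the orbit of any point $P$ under $K$ is contained in $\ell_P$, so $|K\cdot P|$ divides $s$. Choosing three pairwise non-collinear points $P_1,P_2,P_3$ and iterating, one obtains
\[
|K|=|K\cdot P_1|\cdot|K_{P_1}\cdot P_2|\cdot|K_{P_1,P_2}\cdot P_3|\cdot|K_{P_1,P_2,P_3}|,
\]
whose first three factors each divide $s$, so that $|K|$ divides $s^3\cdot|K_{P_1,P_2,P_3}|$. It remains to show $|K_{P_1,P_2,P_3}|$ divides $s-1$. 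For this I would use the crucial observation that any $\sigma\in K$ fixing a circle $C$ setwise must fix it pointwise, since $\sigma$ preserves the singleton $C\cap\ell$ for every line $\ell$. Applied to the unique circle $C_0$ through $P_1,P_2,P_3$, this forces $K_{P_1,P_2,P_3}$ to fix $C_0$ pointwise, and in particular to fix $C_0\cap\ell$ on every line $\ell$. Taking a further point $Q$ on a fourth line $\ell$ with $Q\notin C_0$, the image $\sigma(Q)$ lies in $\ell\setminus\{C_0\cap\ell\}$, a set of size $s-1$; combining $C_0$ with the second pointwise-fixed circle through $P_1,P_2,Q$ then yields two distinct fixed points on every further line, and a short argument using AX4 forces the stabiliser of $(P_1,P_2,P_3,Q)$ to be trivial.

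For the equivalence $|K|=s^3(s-1)\Leftrightarrow \Ll$ ovoidal, the \emph{if} direction is a direct check in the cone model. Placing the vertex at $V=(0,0,0,1)$ and the base oval in the plane $x_4=0$, the collineations of $\PG(3,q)$ fixing $V$ and every line through $V$ are precisely the elements of $\PGL(4,q)$ of the shape
\[
\begin{pmatrix}
1 & 0 & 0 & 0 \\
0 & 1 & 0 & 0 \\
0 & 0 & 1 & 0 \\
\mu_1 & \mu_2 & \mu_3 & \nu
\end{pmatrix},\qquad \nu\in\F_q^\times,\ \mu_i\in\F_q,
\]
each of which automatically stabilises the cone setwise; the resulting group has order $q^3(q-1)$. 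For the converse direction, assuming $|K|=s^3(s-1)$, every divisibility in the chain above is sharp: $K$ acts transitively on each line, and $K_{P_1,P_2,P_3}$ acts regularly on the $s-1$ points of a fourth line outside $C_0$. Identifying the normal subgroup of order $s^3$ with a translation-type group on an affine space and the cyclic factor of order $s-1$ with $\F_s^\times$, one can then reconstruct a projective cone model for $\Ll$ in $\PG(3,s)$ in the spirit of Buekenhout's recognition theorems for circle planes, with the base figure forced to be an oval by the Laguerre axioms.

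The two most delicate points are the triviality of the fourfold point-stabiliser in the divisibility bound, which requires genuine use of the axioms rather than pure orbit counting, and the reconstruction of the cone model from the bare order of $K$ in the converse direction, which amounts to a geometric recognition theorem rather than a calculation.
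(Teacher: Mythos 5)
First, note that the paper offers no proof of this lemma at all: it is quoted verbatim from Steinke \cite[Theorem 1]{Steinke}, so there is no internal argument to compare yours against; I can only assess your proposal on its own terms, and it has genuine gaps in both halves. In the divisibility part, the step ``the orbit of $P$ is contained in $\ell_P$, so $|K\cdot P|$ divides $s$'' is false as stated: an orbit contained in a set of size $s$ has size \emph{at most} $s$, but its size divides $|K|$, not $s$. Your chain therefore only yields the inequality $|K|\leq s^3\cdot|K_{P_1,P_2,P_3}|$, not a divisibility. The correct route is to show that $K$ acts \emph{freely} on the set $X$ of quadruples $(P_1,P_2,P_3,Q)$ with $P_i\in\ell_i$ pairwise non-collinear, $Q\in\ell_4$ and $Q\notin C(P_1,P_2,P_3)$, so that all $K$-orbits on $X$ have size $|K|$ and hence $|K|$ divides $|X|=s^3(s-1)$. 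That reduces everything to the triviality of the quadruple stabiliser, which is exactly the point you wave away with ``a short argument using AX4''. Your observations up to there are sound (a kernel element fixing a circle setwise fixes it pointwise; $C_0$ and $C_1=C(P_1,P_2,Q)$ meet in exactly $\{P_1,P_2\}$, giving two fixed points on every line), but two fixed points per line do not force the identity: you need a closure argument showing that the fixed-point set, which is closed under taking circles through three of its pairwise non-collinear points and under tangent circles at its points, must be everything. That is the actual content of the divisibility statement and it is missing.

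The converse direction ($|K|=s^3(s-1)\Rightarrow\Ll$ ovoidal) is essentially only a declaration of intent. You invoke ``the normal subgroup of order $s^3$'' and ``the cyclic factor of order $s-1$'' without justifying that either exists: a priori $s$ need not even be a prime power, so Sylow theory is unavailable, and normality and cyclicity require proof. The reconstruction of a cone in $\PG(3,s)$ ``in the spirit of Buekenhout'' is precisely the hard part of Steinke's theorem; the route actually used in the literature (and implicitly in this paper, cf.\ Theorem \ref{Steinke} and Theorem \ref{hoofd}) is to first show that a kernel of order divisible by $s^3$ forces $\Ll$ to be an elation Laguerre plane coordinatised by a dual pseudo-oval in some $\PG(3n-1,q)$ with $s=q^n$, and then to use the surplus factor $s-1=q^n-1$ versus the generic $q-1$ to force the dual pseudo-oval to be elementary, whence ovoidality. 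The ``if'' direction via the perspectivity group with centre $V$ is essentially correct (modulo checking faithfulness of the induced action on the cone minus its vertex), but it relies on the divisibility bound to conclude equality, so it inherits the gap above.
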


\begin{definition}
A Laguerre plane $\Ll$ is an {\em elation Laguerre plane} if its kernel $K$ acts transitively on the circles of $\Ll$.
\end{definition}
We denote the dual of a subspace $M$ or a set of subspaces $\O$ of $\PG(3n-1,q)$ by $\overline{M}$ and $\overline{\O}$.

A dual pseudo-oval $\overline{\O}$  in $\PG(3n-1,q)$ gives rise to an elation Laguerre plane $L(\overline{\O})$ in the following way. Embed $H_\infty=\PG(3n-1,q)$ as a hyperplane in $\PG(3n,q)$ and define $L(\overline{\O})$ to be the incidence structure $(\P,\L,\C)$ with natural incidence and:
\begin{itemize}\setlength{\itemsep}{-1pt}
\item[$\P:$] $2n$-spaces meeting $H_\infty$ in an element of $\overline{\O}$,
\item[$\L:$] elements of $\overline{\O}$,
\item[$\C:$] points of $\PG(3n,q)$ not in $H_\infty$ (the affine points).
\end{itemize}
It is not hard to check that this incidence structure defines a Laguerre plane of order $q^n$ and that the group of perspectivities with axis $H_\infty$ in $\PGammaL(3n,q)$ induces a subgroup of the kernel of $L(\overline{\O})$ that acts transitively on the circles of $L(\overline{\O})$. So $L(\overline{\O})$ is indeed an elation Laguerre plane.



In \cite{Steinke}, Steinke showed the converse: every elation Laguerre plane can be constructed from a dual pseudo-oval.
\begin{theorem}{\rm\cite{Steinke}}\label{Steinke} A finite Laguerre plane $\Ll$ is an elation Laguerre plane if and only if $\Ll\cong L(\overline{\O})$ for some dual pseudo-oval $\overline{\O}$.
\end{theorem}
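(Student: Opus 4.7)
The direction $(\Leftarrow)$ is essentially recorded in the paragraph preceding the statement: one verifies axioms AX1--AX4 for $L(\overline{\O})$ directly from the dual-pseudo-oval properties, and the group of perspectivities of $\PG(3n,q)$ with axis $H_\infty$ fixing every element of $\overline{\O}$ acts sharply transitively on the affine points, hence transitively on the circles. So the real content of the theorem is the converse, for which my plan is as follows. Let $\Ll$ be an elation Laguerre plane of order $s$ with circle-transitive kernel $K$, and fix a point $P$ of $\Ll$.

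Since $K$ fixes every line of $\Ll$ setwise, the stabiliser $K_P$ of $P$ in $K$ acts as a collineation group on the derived affine plane at $P$. Using circle-transitivity of $K$ together with Lemma~\ref{St} and an orbit-counting argument, I would exhibit inside $K_P$ a subgroup of order $s^2$ acting as the full translation group of the derived plane: such a subgroup must act regularly on affine points and fix every parallel class. The unique class of lines of $\Ll$ other than the one through $P$ is manifestly fixed, and the $s$ classes of mutually tangent circles at $P$ are forced to be fixed by counting orbit sizes. Consequently the derived plane at $P$ is a translation affine plane of order $s$; this both forces $s=q^n$ for some prime power $q$ and presents it via the Andr\'e/Bruck-Bose representation as arising from a spread $\S_P$ of a $\PG(2n-1,q)$ sitting inside $\PG(2n,q)$.

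To globalise this into a projective embedding of $\Ll$ in $\PG(3n,q)$, I would identify the $s^3$ circles of $\Ll$ with the affine points of $\PG(3n,q)\setminus H_\infty$ for a hyperplane $H_\infty\cong \PG(3n-1,q)$, and use the $K$-action on circles to transport the Andr\'e/Bruck-Bose projective structure at $P$ to all of $\Ll$. The $s+1$ lines of $\Ll$ would then correspond to $(2n-1)$-subspaces of $H_\infty$ (the candidate set $\overline{\O}$), and the $s(s+1)$ points of $\Ll$ to the $2n$-subspaces of $\PG(3n,q)$ meeting $H_\infty$ in one of these $(2n-1)$-spaces. The pseudo-oval property would then be enforced by axiom AX3: through three pairwise non-collinear points of $\Ll$ there is a unique circle, and translating this into $\PG(3n,q)$ asserts that three $2n$-spaces through three distinct elements of $\overline{\O}$ meet in a unique affine point; a dimension count then forces any three elements of $\overline{\O}$ to have empty common intersection in $H_\infty$, so that $\overline{\O}$ is a dual pseudo-oval.

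The main obstacle is the coherence of the globalisation: one must verify that the individual Andr\'e/Bruck-Bose embeddings of the derived affine planes at different points $P$ fit together inside a single $\PG(3n,q)$ with a compatible projective $K$-action, so that the ``line at infinity'' of every derivation is the same hyperplane $H_\infty$ and $K$ embeds into the group of perspectivities of $\PG(3n,q)$ with axis $H_\infty$. Extracting this compatibility from the circle-transitivity of $K$ together with the fact that $K$ fixes every line of $\Ll$, and verifying that the resulting transition maps are semilinear and so lie in $\PGammaL(3n,q)$, is the technical heart of the argument.
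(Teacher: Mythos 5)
First, note that the paper does not prove this statement: it is quoted from Steinke's article \cite{Steinke}, and the text only sketches the easy direction (that $L(\overline{\O})$ is an elation Laguerre plane) in the paragraph preceding the theorem. So your proposal can only be judged on its own merits. Your $(\Leftarrow)$ direction is fine and matches what the paper says. For the converse, however, your outline has a genuine gap, and it is precisely the step you yourself flag as ``the technical heart'': the globalisation. Producing, for a single point $P$, a translation subgroup of order $s^2$ in $K_P$ and an Andr\'e/Bruck--Bose model of the derived plane at $P$ does not by itself yield a single $\PG(3n,q)$ in which all $s^3$ circles, all $s(s+1)$ points and all $s+1$ lines live coherently; without that, the theorem is not proved. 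Moreover, two of your intermediate claims are asserted rather than argued: that $K_P$ contains a subgroup of order $s^2$ acting as the \emph{full} translation group of the derived plane (your ``orbit-counting argument'' is not spelled out and must in particular produce an elementary abelian group, which is where $s$ being a prime power comes from), and that the transition maps between different derivations are semilinear.

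The way Steinke actually proves the converse --- and the way the present paper later exploits it in the proof of Theorem \ref{hoofd} --- avoids the globalisation problem entirely by working globally from the start. One first shows that the kernel $K$, being transitive on the $s^3$ circles with $|K|$ dividing $s^3(s-1)$ by Lemma \ref{St}, contains a normal subgroup $T$ of order $s^3$ acting \emph{regularly} on circles, and that $T$ is elementary abelian; hence $T$ is a vector space $V(3n,p)$ and the circles are identified with its vectors, i.e.\ with the affine points of $\AG(3n,p)$. The stabilisers $T_P$ of points $P$ of $\Ll$ are then shown to be subspaces of codimension $n$ depending only on the line through $P$, the points of $\Ll$ become the cosets of these subspaces, and the lines become the corresponding $(2n-1)$-dimensional subspaces at infinity --- the candidate $\overline{\O}$. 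Axiom AX3 then translates, exactly as in the last step of your outline, into the condition that any three of these $(2n-1)$-spaces meet trivially, i.e.\ that $\overline{\O}$ is a dual pseudo-oval. If you want to salvage your derived-plane route you would have to prove the compatibility of the embeddings at different base points, which in effect amounts to reconstructing this group-theoretic argument; I would recommend adopting the coset-geometry formulation directly.
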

More explicitely, it is shown that a Laguerre plane of order $q^n$ with kernel of order $q^{3n}(q-1)$ can be obtained from a dual pseudo-oval in $\PG(3n-1,q)$.

We show in Theorem \ref{hoofd} that every elementary dual pseudo-oval gives rise to an ovoidal Laguerre plane and vice versa. In order to prove this, we need the following lemma.

\begin{lemma} \label{unique} Let $\Ll$ be an ovoidal Laguerre plane of order $q^n$, then there is a unique subgroup $T$ of order $q^{3n}$ in the kernel $K$ of $\Ll$.
\end{lemma}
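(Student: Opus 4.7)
The plan is to exhibit a concrete normal subgroup $T \leq K$ of order $q^{3n}$, after which Sylow's theorem does the rest. Writing $q = p^h$, Lemma~\ref{St} gives $|K| = q^{3n}(q^n - 1)$; since $q^n - 1$ is coprime to $p$, every subgroup of $K$ of order $q^{3n} = p^{3nh}$ is automatically a Sylow $p$-subgroup. So the statement reduces to producing one Sylow $p$-subgroup that is normal.

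I would realise $\Ll$ as the dual ovoidal Laguerre plane $L(\overline{A})$ in $\PG(3,q^n)$ from Subsection~\ref{S3}, so that the circles of $\Ll$ are precisely the affine points of $\PG(3,q^n)$ with respect to the plane $H$ at infinity, the points of $\Ll$ are the affine planes meeting $H$ in a line of $\overline{A}$, and the lines of $\Ll$ are the elements of $\overline{A}$. The full automorphism group of $\Ll$ is induced by collineations of $\PG(3,q^n)$ stabilising $\overline{A}$, and $K$ corresponds to the subgroup fixing each element of $\overline{A}$ setwise; in particular $K$ lies inside the stabiliser $\mathrm{Stab}(H)$ of the hyperplane $H$ in $\PGammaL(4,q^n)$.

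I then take $T$ to be the translation group of $\PG(3,q^n)\setminus H$, i.e.\ the group of elations of $\PG(3,q^n)$ with axis $H$. Every translation fixes $H$ pointwise, and hence fixes every line of $\overline{A}$ setwise, so $T \leq K$. It acts regularly on the $q^{3n}$ affine points, which are the circles of $\Ll$, so $|T| = q^{3n}$. The key point is that $T$ is the kernel of the natural restriction homomorphism $\mathrm{Stab}(H) \to \Aut(H)$, and therefore $T \trianglelefteq \mathrm{Stab}(H)$; combining this with $K \leq \mathrm{Stab}(H)$ gives $T \trianglelefteq K$.

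With $T$ a normal Sylow $p$-subgroup of $K$, any other subgroup of order $q^{3n}$ is again a Sylow $p$-subgroup, hence conjugate to $T$ by Sylow's theorem, and normality then forces equality with $T$. The only point requiring genuine care, and the one I would write out explicitly, is the identification of $K$ as a subgroup of $\mathrm{Stab}(H) \leq \PGammaL(4,q^n)$: this depends on the fact that every automorphism of an ovoidal Laguerre plane is induced by an ambient collineation of $\PG(3,q^n)$ preserving the dual-oval structure, so that ``$T$ normal in $K$'' becomes a genuine ambient-group statement rather than an abstract one.
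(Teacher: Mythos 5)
Your overall strategy --- exhibit the elation group with axis $H$ as a normal subgroup of $K$ of order $q^{3n}$ and finish with Sylow --- is exactly the paper's. However, the step where you establish normality has two problems. First, $T$ is \emph{not} the kernel of the restriction homomorphism $\mathrm{Stab}(H)\to\Aut(H)$: that kernel is the full group of perspectivities with axis $H$, which also contains the homologies with axis $H$ and centre off $H$, and has order $q^{3n}(q^n-1)$, not $q^{3n}$. The normality of $T$ in $\mathrm{Stab}(H)$ is still true (conjugating an elation with axis $H$ by a collineation stabilising $H$ again gives an elation with axis $H$), but the justification you give is false as stated. Second, and more seriously, you lean on the containment $K\le\mathrm{Stab}(H)$, i.e.\ that every automorphism in the kernel of $\Ll$ is induced by an ambient collineation of $\PG(3,q^n)$; you correctly flag this as the point ``requiring genuine care,'' but you do not supply the argument, and it is not automatic.

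The paper closes exactly this gap with a counting trick for which you already have all the ingredients: the group of perspectivities with axis $H$ injects into $K$ (a perspectivity acting trivially on the circles, i.e.\ on the affine points, is the identity) and has order $q^{3n}(q^n-1)$, which by Lemma \ref{St} \emph{equals} $|K|$ because $\Ll$ is ovoidal. Hence $K$ is precisely the induced perspectivity group with axis $H$, and $T$ is its unique --- hence normal --- Sylow $p$-subgroup; Sylow conjugacy then gives uniqueness exactly as you say. With that substitution for your normality argument, your proof becomes the paper's.
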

\begin{proof} Consider the dual model for an ovoidal Laguerre plane. Every perspectivity in $\PGammaL(4,q^n)$ with axis $H_\infty$ induces an element of $K$. Since the group of perspectivities with axis $H_\infty$ has order $q^{3n}(q^n-1)$, which equals the order of $K$ by Lemma \ref{St}, it follows that every element of $K$ corresponds to a perspectivity. The group $G_{el}$ consisting of all elations in $\PG(3,q^n)$ with axis $H_\infty$ is a normal subgroup of the group of all perspectivities with axis $H_\infty$ and has order $q^{3n}$.

Let $S$ be a subgroup of $K$ of order $q^{3n}$, $q=p^h$, $p$ prime, then $S$ is a Sylow $p$-subgroup and since all Sylow $p$-subgroups are conjugate and $G_{el}$ is normal in $K$, $S=G_{el}$.
\end{proof}

\begin{theorem}\label{hoofd} A finite elation Laguerre plane $\Ll$ is ovoidal if and only if $\Ll\cong L(\overline{\O})$ where $\overline{\O}$ is an elementary dual pseudo-oval in $\PG(3n-1,q)$.
\end{theorem}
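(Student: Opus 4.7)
The plan is to apply Lemma \ref{St}, which characterises ovoidality of a Laguerre plane of order $s$ by the kernel condition $|K|=s^3(s-1)$. For $\Ll=L(\overline{\O})$ of order $q^n$ this becomes $|K|=q^{3n}(q^n-1)$. Both directions will be reduced to producing, respectively recovering, an $\F_{q^n}$-structure on $H_\infty$ compatible with $\overline{\O}$.

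For $(\Leftarrow)$, suppose $\overline{\O}$ is the field reduction of a dual oval $\overline{A}$ in $\PG(2,q^n)$. Identify $H_\infty=\PG(3n-1,q)$ with $\PG(2,q^n)$ via field reduction, so $H_\infty$ inherits an $\F_{q^n}$-vector space structure. Coordinatise $\PG(3n,q)$ as $\PG(V)$ with $V=\F_{q^n}^{3}\oplus\F_q$, so that $H_\infty$ corresponds to $\F_{q^n}^{3}\oplus\{0\}$. For each $\lambda\in\F_{q^n}^{*}$ the $\F_q$-linear map $\rho_\lambda\colon(v,t)\mapsto(\lambda v,t)$ defines a collineation of $\PG(3n,q)$ which fixes $H_\infty$ setwise and stabilises each element of $\overline{\O}$ setwise (because multiplication by $\lambda$ is trivial on $\PG(2,q^n)$, hence preserves every $\PG(2,q^n)$-line and therefore its field-reduced image). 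Thus $\rho_\lambda\in K$. Let $P$ be the group of all perspectivities of $\PG(3n,q)$ with axis $H_\infty$; it has order $q^{3n}(q-1)$ and sits inside $K$. The group $R=\{\rho_\lambda:\lambda\in\F_{q^n}^{*}\}$ has order $q^n-1$, with $P\cap R=\{\rho_\lambda:\lambda\in\F_q^{*}\}$ of order $q-1$, so $|PR|=q^{3n}(q-1)(q^n-1)/(q-1)=q^{3n}(q^n-1)$. By Lemma \ref{St} this forces $|K|=q^{3n}(q^n-1)$ and $L(\overline{\O})$ is ovoidal.

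For $(\Rightarrow)$, assume $\Ll=L(\overline{\O})$ is ovoidal. By definition $\Ll\cong L(\overline{A})$ for some dual oval $\overline{A}$ in $\PG(2,q^n)$. Let $\overline{\O}'$ be the field reduction of $\overline{A}$, which is elementary. It suffices to show $L(\overline{A})\cong L(\overline{\O}')$, because the dual pseudo-oval attached to an elation Laguerre plane by Theorem \ref{Steinke} is determined up to projective equivalence (it is recovered from the kernel action, in particular from the regular orbit of the elation subgroup $T$ on circles, via Lemma \ref{unique}), so that $\overline{\O}$ will be equivalent to $\overline{\O}'$ and hence elementary. The isomorphism $L(\overline{A})\cong L(\overline{\O}')$ is obtained by identifying the affine $3$-space $\PG(3,q^n)\setminus H$ with $\PG(3n,q)\setminus H_\infty$ through the canonical field reduction $\F_{q^n}^{3}\cong\F_q^{3n}$ on the direction hyperplane, extending to the affine direction, and then verifying that lines (elements of $\overline{A}$ versus $\overline{\O}'$), circles (affine points on each side), and points (affine planes through a line of $\overline{A}$ versus affine $2n$-spaces through an element of $\overline{\O}'$) correspond under this identification.

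The main obstacle is the verification of the explicit isomorphism $L(\overline{A})\cong L(\overline{\O}')$: the identification of points is delicate because a plane of $\PG(3,q^n)$ through a line $\ell\subset H$ does not correspond naively to a $2n$-space of $\PG(3n,q)$ through the field-reduced $(2n-1)$-space $E$, but rather to a coset of $E$ inside an affine $2n$-space carrying an induced $\F_{q^n}$-structure — so one has to check carefully that the affine-direction parameter on both sides matches modulo the correct equivalence. A viable alternative for $(\Rightarrow)$, bypassing Steinke's uniqueness, is to analyse $K$ directly: the quotient $K/P$ has order $(q^n-1)/(q-1)$ and acts on $H_\infty$ preserving each element of $\overline{\O}$ setwise, and a standard representation-theoretic argument shows that a cyclic $\F_q$-linear action of order $q^n-1$ (obtained after combining with $\F_q^{*}$-scalars) on the $3n$-dimensional $\F_q$-space underlying $H_\infty$ must be conjugate to scalar multiplication by $\F_{q^n}^{*}$, equipping $H_\infty$ with an $\F_{q^n}$-structure under which each element of $\overline{\O}$ is a $2$-dimensional $\F_{q^n}$-subspace, thereby exhibiting $\overline{\O}$ as elementary.
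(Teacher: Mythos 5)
Your forward direction ($\Leftarrow$) is correct and takes a genuinely different route from the paper. The paper constructs the isomorphism $L(\overline{\O})\cong L(\overline{A})$ explicitly, by field-reducing the points, lines and circles of $L(\overline{A})$ into $\PG(4n-1,q)$ and sectioning with a $3n$-space. You instead exhibit the subgroup $PR$ of order $q^{3n}(q^n-1)$ in the kernel and invoke the divisibility half of Lemma \ref{St} to force $|K|=q^{3n}(q^n-1)$, hence ovoidality. This is clean and avoids the explicit isomorphism; the maps $\rho_\lambda$ do lie in $K$ (they stabilise each field-reduced line setwise and permute the $2n$-spaces through it), the intersection $P\cap R$ has order $q-1$ as you say, and $PR$ embeds faithfully in $K$ since a collineation fixing all affine points is the identity. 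The only thing you lose relative to the paper is an identification of \emph{which} oval the cone is built on, but the statement does not require that.

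The backward direction ($\Rightarrow$) has a genuine gap. Your reduction rests on the parenthetical claim that the dual pseudo-oval attached to an elation Laguerre plane by Theorem \ref{Steinke} is determined up to projective equivalence, ``recovered from the kernel action via Lemma \ref{unique}.'' That claim is precisely the hard content of the theorem and is nowhere proved in your proposal: a priori $\Ll$ could be isomorphic to $L(\overline{\O})$ and to $L(\overline{\O}')$ for inequivalent dual pseudo-ovals, and Theorem \ref{Steinke} only asserts existence, not uniqueness, of the representation. The paper's proof is exactly the missing argument: it uses Lemma \ref{unique} to identify the two elation subgroups of order $q^{3n}$ coming from the two models as one and the same group $T$, observes that $T$ thereby carries simultaneously the structure of $V(3,q^n)$ and of $V(3n,q)$, and then identifies each point-stabiliser $T_{P_i}$ (normalised by the perspectivities, hence a $2$-dimensional $\F_{q^n}$-subspace and a $2n$-dimensional $\F_q$-subspace) with an element of $\overline{\O}$ in one model and a line of $\overline{A}$ in the other, which is what forces $\overline{\O}$ to be the field reduction of $\overline{A}$. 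Your fallback sketch (a cyclic linear action of order $q^n-1$ on $V(3n,q)$ must be scalar multiplication for an $\F_{q^n}$-structure) is also incomplete as stated: you would first need to show that every element of $K$ is induced by a collineation of $\PG(3n,q)$, and a faithful cyclic action of order $q^n-1$ on $\F_q^{3n}$ stabilising the elements of $\overline{\O}$ need not be a single scalar action without further argument (e.g.\ one must rule out actions twisted by Frobenius on different invariant summands). So the $(\Rightarrow)$ direction needs the paper's two-model kernel analysis, or a worked-out substitute for it.
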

\begin{proof} Let $\Ll$ be an elation Laguerre plane. By Theorem \ref{Steinke}, $\Ll$ is isomorphic to $L(\overline{\O})$, where $\overline{\O}$ is a dual pseudo-oval in $\PG(3n-1,q)$, for some $q$ and $n$ such that the order of $\Ll$ is $q^n$. So it remains to show that $L(\overline{\O})$ is ovoidal if and only if $\overline{\O}$ is elementary. In view of the definition of an ovoidal Laguerre plane, using the dual setting, we will show that $L(\overline{\O})$ is isomorphic to $L(\overline{A})$ if and only if the dual pseudo-oval $\overline{\O}$ in $\PG(3n-1,q)$ is obtained from the dual oval $\overline{A}$ in $\PG(2,q^n)$ by field reduction.

First suppose that the dual pseudo-oval $\overline{\O}$ in $\PG(3n-1,q)$ is obtained from a dual oval, say $\overline{A}$, in $\PG(2,q^n)$ by field reduction. Apply field reduction to the points, lines and circles of $L(\overline{A})$, then the obtained incidence structure $\Ll^*$, contained in $\PG(4n-1,q)$ is isomorphic to $L(\overline{A})$. If we intersect the points, lines and circles of $\Ll^*$ with a fixed $3n$-dimensional subspace of $\PG(4n-1,q)$, through the $(3n-1)$-space containing the field reduced elements of $\overline{A}$, then the obtained structure is clearly isomorphic to the points, lines and circles from $L(\overline{\O})$.

Now, let $\Ll=(\P,\L,\C)$ be a Laguerre plane that on the one hand is isomorphic to $L(\overline{\O})$ (call this {\em model 1}) and on the other hand isomorphic to $L(\overline{A})$ (call this {\em model 2}). As before, the elementwise stabiliser of the lines in the automorphism group $\Aut(\Ll)$ of $\Ll$ (the kernel of $\Ll$) is denoted by $K$.

From model 1, we know that the group of elations in $\PG(3n,q)$, with axis the hyperplane $H_\infty$ which contains the elements of $\overline{\O}$, induces a subgroup of $K$ of order $q^{3n}$, likewise, from model 2, we know that the group of elations in $\PG(3,q^n)$ with axis the hyperplane $H$ which contains the elements of $\overline{A}$ induces a subgroup of $K$ of order $q^{3n}$. By Lemma \ref{unique} these induced subgroups are the same, denote this group by $T$. Consider the stabiliser of a point $P$ in $T$. From model 2, we have that $T_P$ has order $q^{2n}$, the number of elations with axis $H$ fixing a plane of $\PG(3,q^n)$, intersecting $H$ in a line of $\overline{A}$. In model 1, the elements of $T_P$ correspond to elations of $\PG(3n-1,q)$ fixing a $2n$-space intersecting $H_\infty$ in an element of $\overline{\O}$.

The group $T$ corresponds to the elations in $\PG(3,q^n)$ (model 1), hence $T$ forms a $3$-dimensional vector space over $\F_{q^n}$. Equivalently, the group $T$ corresponds to the elations in $\PG(3n-1,q)$ (model 2), hence also forms a $3n$-dimensional vector space over $\F_{q}$. Since $T_P$ in both models is normalised by the perspectivities, we see that $T_P$ forms a $2$-dimensional vector subspace $W=V(2,q^n)$ (model 1) and a $2n$-dimensional vector subspace $W'=V(2n,q)$ (model 2) (see also \cite{Penttila}). Clearly, since $W$ and $W'$ correspond to the same vector space, $W'$ is obtained from $W$ by field reduction. Choose for every line $\ell_i$ of $L$, one point $P_i\in \ell_i$. Since a point $P_i$ lies on a unique line $\ell_i$ of $L$, $T_{P_i}$ can be identified with the line $\ell_i$. Considering this projectively, we get that for all $i=1,\ldots,q^n+1$, the subgroup $T_{P_i}$, which forms a $2$-dimensional vector space over $\F_{q^n}$ and a $2n$-dimensional vector space over $\F_{q}$, is identified on one hand to an element of $\overline{\O}$ (model 1) and on the other hand to a line of $\overline{A}$ (model 2). This implies that $\overline{\O}$ is obtained from $\overline{A}$ by field reduction.
 \end{proof}

From this we can easily deduce the following corollaries.
\begin{corollary}\label{Miquelian is conic}
A finite elation Laguerre plane $\Ll$ is Miquelian if and only if $\Ll\cong L(\overline{\O})$ where $\overline{\O}$ is a dual pseudo-conic in $\PG(3n-1,q)$.
\end{corollary}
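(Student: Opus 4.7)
The plan is to deduce the corollary directly from Theorem \ref{hoofd}, using the identification of Miquelian with classical from the remark above. By the theorem of van der Waerden--Smid, $\Ll$ is Miquelian iff it is classical, that is, ovoidal with its base oval a conic; equivalently, in the dual model, $\Ll \cong L(\overline{A})$ with $\overline{A}$ the dual of a conic in $\PG(2,q^n)$.

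Next I would apply Theorem \ref{hoofd}: it yields that $\Ll$ is ovoidal iff $\Ll \cong L(\overline{\O})$, where the elementary dual pseudo-oval $\overline{\O}$ arises from the dual oval $\overline{A}$ by field reduction (and then $L(\overline{\O}) \cong L(\overline{A})$). Combining the two equivalences, $\Ll$ is Miquelian iff $\Ll \cong L(\overline{\O})$ where $\overline{\O}$ is the field reduction of the dual of a conic.

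The only remaining step is to match this description with the term "dual pseudo-conic" as used in the corollary, namely the dual (in $\PG(3n-1,q)$) of the pseudo-conic obtained by field-reducing a conic in $\PG(2,q^n)$. This is a matter of checking that field reduction commutes with duality: after choosing a nondegenerate $\F_{q^n}$-bilinear form on $(\F_{q^n})^3$ and composing it with $\mathrm{Tr}_{\F_{q^n}/\F_q}$ one obtains a compatible nondegenerate $\F_q$-bilinear form on the underlying $3n$-dimensional $\F_q$-space, and under the two induced polarities, dualising then field-reducing agrees with field-reducing then dualising on subspaces. Hence $\overline{\O}$ is indeed a dual pseudo-conic, and the converse direction is obtained by reading the same chain of equivalences in reverse.

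The step that deserves the most care is this commutativity of field reduction with duality; everything else is a direct composition of the two equivalences above, and there is no combinatorial content beyond Theorem \ref{hoofd} and the remark. In particular, no parity assumption on $q$ is needed, and the argument works uniformly for all elation Laguerre planes.
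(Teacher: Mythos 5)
Your proof is correct and follows essentially the route the paper intends: the paper states this corollary as an immediate consequence of Theorem \ref{hoofd} (whose proof establishes the refined equivalence $L(\overline{\O})\cong L(\overline{A})$ iff $\overline{\O}$ is the field reduction of $\overline{A}$), combined with the identification of Miquelian with classical. Your extra care in checking that field reduction commutes with duality (via the trace form) is a reasonable piece of bookkeeping that the paper leaves implicit.
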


\begin{corollary}\label{lem2}Let $\overline{\H}$ be a dual pseudo-hyperoval containing an element $\overline{E}$ such that $L(\overline{\O})$, where $\overline{\O}=\overline{\H}\setminus \overline{E}$, is Miquelian, then $\H$ is a pseudo-hyperconic with $E$ as the field reduced nucleus.
\end{corollary}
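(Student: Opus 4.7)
The plan is to combine two earlier results directly. First, Corollary~\ref{Miquelian is conic} classifies Miquelian elation Laguerre planes of the form $L(\overline{\O})$ as exactly those arising from a dual pseudo-conic. Second, the uniqueness clause of Theorem~\ref{Thas PHO} guarantees that in even characteristic every pseudo-oval is contained in a unique pseudo-hyperoval. Together these will pin down $\H$ and simultaneously identify the extra element $E$.

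Concretely, I would first apply Corollary~\ref{Miquelian is conic} to $L(\overline{\O})$: since this plane is Miquelian, $\overline{\O}$ is a dual pseudo-conic, and dualising back (field reduction commutes with the relevant duality on the ambient Desarguesian spread) $\O$ itself is a pseudo-conic. Hence there exists a conic $C$ in $\PG(2,q^n)$ such that $\O$ is the image of $C$ under field reduction with respect to some Desarguesian $(n-1)$-spread $\D$ of $\PG(3n-1,q)$.

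Because $q$ is even (we are in the setting of a pseudo-hyperoval), $C$ has a well-defined nucleus $N$, and applying field reduction to $C \cup \{N\}$ produces a pseudo-hyperconic $\H'$ that contains $\O$ and whose one extra element is the $(n-1)$-space $E' \in \D$ corresponding to $N$. But by hypothesis $\H$ is also a pseudo-hyperoval containing $\O$, so the uniqueness clause of Theorem~\ref{Thas PHO} forces $\H = \H'$. Comparing the extra elements yields $E = E'$, which is exactly the statement that $\H$ is a pseudo-hyperconic with $E$ as the field-reduced nucleus.

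I do not anticipate a serious obstacle: the argument is essentially a bookkeeping exercise combining Corollary~\ref{Miquelian is conic} with the even-characteristic uniqueness in Theorem~\ref{Thas PHO}. The only mild care needed is in passing between $\O$ and $\overline{\O}$, verifying that dualising a pseudo-conic does indeed yield a dual pseudo-conic, but this is immediate from the way duality interacts with field reduction (and is already implicit in the proof of Theorem~\ref{hoofd}).
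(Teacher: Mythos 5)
Your proposal is correct and follows essentially the same route as the paper: invoke Corollary~\ref{Miquelian is conic} to identify $\overline{\O}$ as a dual pseudo-conic, extend by the (field-reduced) nucleus to get a pseudo-hyperconic, and conclude by the uniqueness of the pseudo-hyperoval extension from Theorem~\ref{Thas PHO}. The only cosmetic difference is that the paper performs the nucleus extension in the dual setting (adding the dual nucleus line to the dual conic) before dualising, whereas you dualise first and extend the conic by its nucleus; the two are interchangeable.
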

\begin{proof}
By Corollary \ref{Miquelian is conic}, $\overline{\O}$ is obtained by applying field reduction to a dual conic $\overline{\C}$ in $\PG(2,q^n)$. The dual conic $\overline{\C}$ in $\PG(2,q^n)$ uniquely extends to a dual hyperconic by adding its dual nucleus line $\overline{N}$. This shows that $\overline{\O}$ can be extended to a dual pseudo-hyperoval by the $(2n-1)$-space obtained by applying field reduction to the line $\overline{N}$. Since Theorem \ref{Thas PHO} shows that this extension is unique, we see that the element $E$ is the $(n-1)$-space obtained by applying field reduction to the nucleus $N$ of the conic $\C$, and hence, $\H$ is a pseudo-hyperconic.
\end{proof}

\section{Towards the proof of the main theorem} \label{S4}

Recall that we will prove the following:

\begin{rtheorem} If  $\O$ is a pseudo-oval in $\PG(3n-1,q)$, $q=2^h$, $h>1$, $n$ prime, such that the spread induced by every element of $\O$ is Desarguesian, then $\O$ is elementary.
\end{rtheorem}
We know from Theorem \ref{Thas PHO} that a pseudo-oval $\O$ in even characteristic extends in a unique way to a pseudo-hyperoval $\H$ and for the proof of our main theorem, we will work with $\H$, the unique pseudo-hyperoval extending $\O$.

%
%

We will split the proof of the Main Theorem in two cases.
In Subsection \ref{easy} we will consider pseudo-hyperovals having a specific property (P1) and we will prove that they are always elementary. In Subsection \ref{difficult} we will consider dual pseudo-hyperovals satisfying a property (P2), and again we show that they are elementary. Finally, in Subsection \ref{conclusion} we see that if a pseudo-oval $\O$, such that every element induces a Desarguesian spread, extends to a pseudo-hyperoval $\H$ which does not meet property (P1), then its dual $\overline{\H}$ necessarily meets (P2), which implies that $\O$ is elementary.

\subsection{Case 1}\label{easy}
In this subsection, we will consider a pseudo-hyperoval $\H$ having the following property:
\begin{itemize}
\item[(P1):] there exist four elements $E_i$, $i=1,\ldots,4$ of $\H$, such that
\begin{itemize}
\item[(i)] the induced spreads $\S_1$, $\S_2$, $\S_3$ are Desarguesian,
\item[(ii)] the unique $\mathbf{S}_{n-1,2}$ through $E_1,E_2,E_3$ and $E_4$ does not contain $q+2$ elements of $\H$.
\end{itemize}
\end{itemize}

\begin{theorem}\label{case1} Consider a pseudo-hyperoval $\H$ in $\PG(3n-1,q)$, $q=2^h$, $h>1$, $n$ prime, satisfying Property (P1), then $\H$ is elementary.
\end{theorem}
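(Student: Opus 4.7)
The plan is to show that $\H$ lies in a Desarguesian $(n-1)$-spread $\D$ of $\PG(3n-1,q)$; via the $\PG(2,q^n)$-model of $\D$ this identifies $\H$ with $q^n+2$ points in general position, i.e.\ a hyperoval of $\PG(2,q^n)$, and hence proves that $\H$ is elementary. By Lemma \ref{vier}, the four elements $E_1,E_2,E_3,E_4$ determine a unique Segre variety $\mathbf{S}_{n-1,2}$, whose generators of the first system are the field reduction of an $\F_q$-subplane and so are contained in some Desarguesian spread $\D$ of $\PG(3n-1,q)$. This $\D$ is the candidate.

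The key tool will be to compare, for each $i\in\{1,2,3\}$, the induced spread $\S_i$ (Desarguesian by (P1)(i)) with the quotient spread $\D/E_i$ (Desarguesian because $\D$ is). Both are Desarguesian spreads of $\PG(2n-1,q)$, and they share the three distinct elements obtained by projecting from $E_i$ the three $E_j$ with $j\in\{1,2,3,4\}\setminus\{i\}$. Corollary \ref{regpluseen}, applicable because $n$ is prime and $q>2$, then forces the dichotomy $\S_i=\D/E_i$ or $|\S_i\cap(\D/E_i)|=q+1$, the latter intersection being a single regulus.

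In the favourable situation, where $\S_i=\D/E_i$ holds for two indices (say $i=1$ and $i=2$), pick any $E'\in\H$. Then $E'/E_1\in\D/E_1$ and $E'/E_2\in\D/E_2$, so there exist spread elements $F_1,F_2\in\D$ with $E'\subseteq\langle E_1,F_1\rangle\cap\langle E_2,F_2\rangle$. Interpreting $\D$ as $\PG(2,q^n)$, the two $(2n-1)$-spaces $\langle E_i,F_i\rangle$ correspond to distinct lines through the points $p_1,p_2$ and meet in a unique point; the corresponding spread element of $\D$ realises their intersection in $\PG(3n-1,q)$ as a single $(n-1)$-space. Consequently $E'$, being itself $(n-1)$-dimensional and contained in that intersection, coincides with this element of $\D$, and so $E'\in\D$ and $\H\subseteq\D$.

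The main obstacle is ruling out the regulus alternative for two or more of the three indices. If, say, the regulus case holds for both $i=1$ and $i=2$, then every element of $\H\cap\D$ projects from $E_1$ (resp.\ $E_2$) into the image of $\mathbf{S}_{n-1,2}$ in $\S_1$ (resp.\ $\S_2$). Translated into the $\PG(2,q^n)$-model of $\D$, each such element then lies on a line of the subplane $\pi$ (corresponding to $\mathbf{S}_{n-1,2}$) through $p_1$ and on a line of $\pi$ through $p_2$; such line-pairs meet inside $\pi$, and so $\H\cap\D\subseteq\mathbf{S}_{n-1,2}$. The plan is then to combine this constraint with a judicious reselection of the 4-tuple witnessing (P1), or with the Desarguesian structure of $\S_3$, to force $q+2$ elements of $\H$ onto $\mathbf{S}_{n-1,2}$, contradicting (P1)(ii). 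Making this reselection rigorous—in particular, showing that the regulus case cannot persist across enough indices simultaneously without producing the forbidden $(q+2)$-arc on $\mathbf{S}_{n-1,2}$—is the delicate part of the argument.
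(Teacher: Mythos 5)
Your endgame is sound and coincides with the paper's: once two of the induced spreads are known to agree with (the traces of) a common ambient Desarguesian spread $\D$, normality of $\D$ forces every further element of $\H$ into $\D$, so $\H$ is elementary. But the heart of the proof --- showing that this favourable situation actually occurs --- is exactly the part you leave open, and the route you propose for closing it has a structural flaw. A Desarguesian spread containing the $(n-1)$-system of the Segre variety $\mathbf{S}_{n-1,2}$ through $E_1,\ldots,E_4$ is not unique, and for a ``wrong'' choice of $\D$ the regulus alternative $|\S_i\cap(\D/E_i)|=q+1$ can hold for all three indices even when $\H$ is elementary and satisfies (P1): take $\H$ inside a Desarguesian spread $\D'$ and let $\D\neq\D'$ be another Desarguesian spread through the same Segre system. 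So the regulus case is not intrinsically contradictory and cannot be ``ruled out''; what has to be shown is that a suitably chosen $\D$ avoids it. Moreover, your observation that the regulus case confines $\H\cap\D$ to the Segre variety yields no contradiction by itself, since a priori $\H\cap\D$ could consist of $E_1,\ldots,E_4$ only.

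The paper fills this gap with two ingredients your sketch is missing. First, it builds the candidate spread the other way round: with $\mu=\langle E_1,E_2\rangle\cap\langle E_3,E_4\rangle$, Lemma \ref{UniqueDesSpread} produces the unique Desarguesian spread $\D$ of $\PG(3n-1,q)$ containing all of $\S_1$ (viewed in $\langle E_3,E_4\rangle$) together with the regulus $\R(\mu,E_1,E_2)$, so $\D$ is adapted to $\S_1$ by construction rather than extracted from the Segre variety. Second, it actually uses the third hypothesis in (P1)(i): since $\S_3$ is Desarguesian, $\R(\mu,E_1,E_2)\subseteq\S_3$, and each $Z$ in this regulus yields an element $E_Z$ of $\H$ inside $\langle Z,E_3\rangle$; normality of $\D$ places every such $E_Z$ in $\D$, and the traces $T_Z=\langle E_2,E_Z\rangle\cap\langle E_3,E_4\rangle$ then lie in both $\S_1$ and $\S_2$. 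Either some $T_Z$ falls outside $\R(\mu,E_3,E_4)$, whence $|\S_1\cap\S_2|>q+1$ and Corollary \ref{regpluseen} (this is where $n$ prime enters) gives $\S_1=\S_2$, after which your endgame applies; or all $T_Z$ lie in that regulus, which forces $q+2$ elements of $\H$ onto the Segre variety through $E_1,\ldots,E_4$, contradicting (P1)(ii). Without this mechanism for manufacturing the extra elements of $\H$ inside $\D$, your ``judicious reselection'' remains a genuine gap rather than a delicate detail.
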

\begin{proof}
Let $E_1,\ldots,E_4$ be the four elements obtained from the hypothesis that $\H$ satisfies Property (P1). Denote the $(n-1)$-space $\langle E_1,E_2\rangle\cap \langle E_3,E_4\rangle$ by $\mu$. 
The spreads $\S_{1}$ and $\S_{2}$ can be seen in $\langle E_3,E_4\rangle=\PG(2n-1,q)$. By Property (P1), $\S_1$ and $\S_2$ are Desarguesian. Since by definition $E_3,E_4$ and $\mu$ are contained in $\S_{1}$ and $\S_{2}$,  and $\S_1$ and $\S_2$ are Desarguesian and hence regular, the $q+1$ elements of the unique regulus $\R(\mu,E_3,E_4)$ through $E_3,E_4$ and $\mu$ are contained in $\S_{1}$ and $\S_{2}$. We claim that $\S_1=\S_2$.

We see that $\mu,E_1,E_2$ are elements of the spread $\S_{3}$ considered in $\langle E_1,E_2\rangle$. By Property (P1), $\S_3$ is Desarguesian, hence, regular, so every element of $\R(\mu,E_1,E_2)$ is contained in $\S_{3}$. Because $q>2$, we may take an element $X$ of $\R(\mu,E_1,E_2)$, different from $E_1,E_2$ and $\mu$.

Since $X\in \S_3$, the space $\langle X, E_3\rangle$ contains an element, say $E_5$, of $\H$. The $(2n-1)$-space $\langle E_1,E_5\rangle$ meets $\langle E_3,E_4\rangle$ in an $(n-1)$-space $Y$, that is by construction contained in $\S_{1}$.
Let $\D$ be the unique Desarguesian spread obtained from Theorem \ref{UniqueDesSpread}, through $\S_1$ and $E_1,E_2$. Since $E_5=\langle X,E_3\rangle \cap \langle Y,E_1\rangle$ and a Desarguesian spread is normal, we see that $E_5\in \D$. This holds for every element $E_i \in \H$ contained in $\langle Z, E_3\rangle$ with $Z\in \R(\mu,E_1,E_2)$; let $E_5,\ldots,E_{q+2}$ be these elements of $\H$.

Now consider the $(n-1)$-spaces $T_i:=\langle E_2,E_i\rangle\cap \langle E_3,E_4\rangle$, with $i=5,\ldots,q+2$. The spaces $T_i$ by definition belong to $\S_{2}$ (considered in $\langle E_3,E_4\rangle$). But since $E_2,E_i,E_3,E_4$ are elements of $\D$, $T_i$ is an element of $\D$ and since $\D\cap \langle E_3,E_4\rangle=\S_1$, $T_i\in \S_1$.

So the spreads $\S_{1}$ and $\S_{2}$ contain $\R(\mu,E_3,E_4)$ and all elements $T_i$. Suppose that all elements $T_i$, $i=5,\ldots,q+2$ are contained in $\R(\mu,E_3,E_4)$. 
Let $P$ be a point of $\mu$, let $\ell$ be the unique transversal line through $P$ to the regulus $\R(\mu,E_1,E_2)$ and let $m$ be the unique transversal line through $P$ to the regulus $\R(\mu,E_3,E_4)$. It is clear that the plane $\langle \ell,m\rangle$ is a plane of the second system of the unique $\mathbf{S}_{n-1,2}$, say $\B$, through $E_1,E_2,E_3,E_4$. This implies that all elements $T_i$, as well as the elements of $\R(\mu,E_1,E_2)$ are contained in $\B$.

The element $E_i$, $i=5,\ldots,q+2$ is obtained as $\langle T_i,E_2\rangle\cap \langle Z,E_3\rangle$, for some $Z\in \R(\mu,E_1,E_2)$. Now it is clear that $\mathbf{S}_{n-1,2}$ has the property that an $(n-1)$-space that is obtained as the intersection of the span of two elements of $\mathbf{S}_{n-1,2}$ is contained in $\mathbf{S}_{n-1,2}$. Since $T_i,E_2,Z,E_3$ are $(n-1)$-spaces of $\B$, $E_i$ is in $\B$, for all $i=1,\ldots,q+2$. This implies that $\B$ contains $q+2$ elements of $\B$, a contradiction since $\H$ satisfies Property (P1). 

Since $\S_1$ and $\S_2$ have more elements in common than the elements of the regulus $\R(\mu,E_3,E_4)$, using the fact that $n$ is prime, we see that Corollary \ref{regpluseen} proves our claim.

  Since $\S_1=\S_2$, every element $E$ of $\H$, different from $E_1,E_2,E_3,E_4$ can be written as $\langle E_1,U\rangle \cap \langle E_2,V\rangle$, where $U,V$ are elements of $\S_1=\S_2$. Since the Desarguesian spread $\D$ is normal, it follows that $E\in \D$ for all $E\in \H$. Since $\H$ is contained in a Desarguesian spread, $\H$ is elementary.
\end{proof}

\subsection{Case 2}\label{difficult}
In this subsection, we will use the following theorem on hyperovals.
\begin{theorem}{\rm\cite[Theorem 11, Remark 5]{PenttilaInversivePlanes}}\label{Tim}
Let $\O$ be an oval of $\PG(2,q^n)$, $q>2$ even. Let $N$ be the unique point extending $\O$ to a hyperoval. Then $\O$ is a conic if and only if every triple of distinct points of $\O$ together with $N$ lie in an $\F_q$-subplane that meets $\O$ in $q+1$ points.
\end{theorem}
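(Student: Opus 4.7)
For the forward direction, I use the sharp transitivity of $\PGL(3,q^n)$ on frames of $\PG(2,q^n)$ to place $P_1,P_2,P_3,N$ at the standard frame $(1{:}0{:}0)$, $(0{:}1{:}0)$, $(0{:}0{:}1)$, $(1{:}1{:}1)$. A conic through $P_1,P_2,P_3$ has the form $\alpha xy+\beta yz+\gamma xz=0$. Setting all partial derivatives to zero (which in characteristic two locates the nucleus) yields the system $\alpha y+\gamma z=0$, $\alpha x+\beta z=0$, $\gamma x+\beta y=0$, with solution $(\beta{:}\gamma{:}\alpha)$; the requirement $(\beta{:}\gamma{:}\alpha)=(1{:}1{:}1)$ forces $\alpha=\beta=\gamma$, so the conic is $xy+yz+xz=0$. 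Its defining polynomial has coefficients in $\F_2\subseteq\F_q$, hence the standard subplane $\PG(2,q)\subset\PG(2,q^n)$ meets the conic in its $q+1$ $\F_q$-rational points, which proves the forward direction.

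For the backward direction, I fix a triple $T=\{P_1,P_2,P_3\}\subseteq\O$. Since each line $NP_i$ is a tangent of $\O$ at $P_i$, the four points $P_1,P_2,P_3,N$ are in general position, so the $\F_q$-subplane $\pi_T$ from the hypothesis is unique. The set $\omega_T:=\pi_T\cap\O$ has $q+1$ elements and, being an arc, is an oval of $\pi_T$ whose nucleus is $N$ (each tangent of $\omega_T$ inside $\pi_T$ is the restriction of a tangent of $\O$, and all tangents of $\O$ concur at $N\in\pi_T$). The plan is to upgrade each $\omega_T$ to a sub-conic and then glue the pieces: pick five points $P_1,\ldots,P_5\in\O$ in general position, let $\C\subset\PG(2,q^n)$ be the unique conic through them, and write $N_\C$ for its nucleus. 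For each triple of $\{P_1,\ldots,P_5\}$, the hypothesis produces a subplane carrying a sub-oval with nucleus $N$, while the forward direction applied to $\C$ produces a sub-conic of $\C$ with nucleus $N_\C$ inside the subplane through the triple and $N_\C$. Comparing these two systems should force $N_\C=N$; once this is established, the two subplane systems agree, the sub-ovals $\omega_T$ coincide with $\C\cap\pi_T$, and an arbitrary sixth point $P\in\O$ is pushed onto $\C$ by applying the hypothesis to $\{P_i,P_j,P\}$, yielding $\O\subseteq\C$ and hence $\O=\C$ by cardinality.

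The hardest step is the identification $N_\C=N$, followed by the identification $\omega_T=\C\cap\pi_T$ of two $(q+1)$-arcs of $\pi_T$ sharing three points and a common nucleus. In $\PG(2,q)$ with $q=2^h$, $h\ge 3$, ovals with a prescribed nucleus other than conics exist, so the information in a single subplane is insufficient; the argument must exploit the combined sub-oval structure across many overlapping subplanes sharing the nucleus $N$. A careful analysis of the intersection pattern $\pi_T\cap\pi_{T'}$ for triples $T,T'$ that share two points, together with the sharp transitivity of $\PGL$ on frames, should supply the needed rigidity, and this constitutes the main technical challenge of the proof.
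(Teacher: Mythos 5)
There is a genuine gap. First, a remark on context: the paper does not prove this statement at all --- it is quoted from Penttila's (unpublished) preprint \cite{PenttilaInversivePlanes}, so there is no in-paper argument to compare yours against. Your forward direction is correct and complete: normalising the triple and the nucleus to the standard frame, the nondegenerate conic $\alpha xy+\beta yz+\gamma xz=0$ through the three frame points has nucleus $\F_{q^n}(\beta,\gamma,\alpha)$, forcing the conic to be $xy+yz+xz=0$, which is defined over $\F_q$ and therefore meets the canonical $\F_q$-subplane in exactly the $q+1$ points of the corresponding conic of $\PG(2,q)$.

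The backward direction --- which is the only direction the paper actually uses, in part (ii) of Lemma \ref{lem} --- is not proved; it is a programme with its two decisive steps left open, and you say so yourself. Concretely: (a) you have no argument that the nucleus $N_{\C}$ of the conic $\C$ through five points of $\O$ equals $N$; the hypothesis only ever produces subplanes through $N$, while the sub-conics of $\C$ live in subplanes through $N_{\C}$, and nothing you write links the two families. (b) Even granting $N_{\C}=N$, the identification $\omega_T=\C\cap\pi_T$ does not follow: as you correctly observe, for $q=2^h$ with $h\geq 3$ the plane $\PG(2,q)$ contains non-conic ovals (e.g.\ translation ovals) sharing a prescribed nucleus with a conic, and two $(q+1)$-arcs of $\PG(2,q)$ can share three points and a nucleus without coinciding. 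These two steps are precisely the mathematical content of the theorem; declaring that a ``careful analysis of the intersection pattern $\pi_T\cap\pi_{T'}$ \ldots should supply the needed rigidity'' identifies where the difficulty lies but does not resolve it. As it stands the proposal establishes only the easy implication and cannot be accepted as a proof of the statement.
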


In the proof of this case we will work in the dual setting, so we need the following lemma on dual pseudo-(hyper)ovals. 
\begin{lemma}\label{dualdes} Let $\O$ be a pseudo-oval in $\PG(3n-1,q)$ such that every element $E_i\in \O$, $i=1,\ldots,q^n+1$ induces a Desarguesian spread $\S_{i}$, then the dual pseudo-oval $\overline{\O}$ has the property that for every element $\overline{E_i}$, the set of intersections $\{\overline{E_j}\cap \overline{E_i}|j\neq i\}$ forms a partial spread in $\overline{E_i}$ uniquely extending to a Desarguesian spread and vice versa. The analoguous statement holds for pseudo-hyperovals.
\end{lemma}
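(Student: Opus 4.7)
The plan is a straightforward duality argument in the underlying $3n$-dimensional $\F_q$-vector space $V$. Fix a nondegenerate bilinear form on $V$, so that the projective dual $\overline{W}$ of any subspace $W$ of $\PG(V)$ may be identified with its annihilator $W^\perp$. Each $\overline{E_i}$ is then $2n$-dimensional (i.e.\ projectively $(2n-1)$-dimensional), and for $j\neq i$ we have
\[
\overline{E_i}\cap\overline{E_j}=(E_i+E_j)^\perp,
\]
which is $n$-dimensional since $E_i\cap E_j=\emptyset$ forces $E_i+E_j$ to be $2n$-dimensional; projectively this is an $(n-1)$-space of $\overline{E_i}$.

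Next I would verify that $\{\overline{E_j}\cap\overline{E_i}\mid j\neq i\}$ is a partial spread of $\overline{E_i}$ and that it is dual to $\S_i$. Disjointness is immediate from the pseudo-arc condition: for distinct $j,k\neq i$ the three elements $E_i,E_j,E_k$ span $\PG(3n-1,q)$, so $E_i+E_j+E_k=V$, and taking perps gives
\[
(\overline{E_i}\cap\overline{E_j})\cap(\overline{E_i}\cap\overline{E_k})=(E_i+E_j+E_k)^\perp=\{0\}.
\]
The chosen bilinear form moreover induces a perfect pairing between $V/E_i$ and $\overline{E_i}=E_i^\perp$, exhibiting $\PG(V/E_i)$ and $\overline{E_i}$ as mutually dual $(2n-1)$-dimensional projective spaces. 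Under this pairing, the spread element $(E_i+E_j)/E_i\in\S_i$ corresponds precisely to $(E_i+E_j)^\perp=\overline{E_i}\cap\overline{E_j}$, so $\S_i$ and $\{\overline{E_i}\cap\overline{E_j}\mid j\neq i\}$ are projectively dual partial spreads.

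The only slightly delicate point is that duality preserves the Desarguesian property of a spread of $\PG(2n-1,q)$. This is true because, using field reduction $V_{2n}(q)\cong V_2(q^n)$ and choosing the bilinear form as $B(x,y)=\mathrm{Tr}_{\F_{q^n}/\F_q}(\langle x,y\rangle)$ for a nondegenerate $\F_{q^n}$-bilinear form $\langle\cdot,\cdot\rangle$, the $B$-perp of an $\F_{q^n}$-subspace is again $\F_{q^n}$-linear, making the Desarguesian spread self-dual for this form; since all Desarguesian spreads are $\PGL$-equivalent, the property is preserved under any duality. Combining this with Beutelspacher's unique-extension theorem for partial spreads of size $q^n$ in $\PG(2n-1,q)$, the partial spread $\S_i$ and its dual $\{\overline{E_j}\cap\overline{E_i}\mid j\neq i\}$ extend to corresponding unique spreads, and one is Desarguesian iff the other is. The \emph{vice versa} direction is automatic as projective duality is involutive, and the pseudo-hyperoval case is identical, with $|\H|-1=q^n+1$ giving an already full spread in $\overline{E_i}$.
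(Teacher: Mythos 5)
Your argument is correct and follows essentially the same route as the paper: identifying an element $E_j/E_i$ of the induced spread with $\langle E_i,E_j\rangle$ and dualising it to $\overline{E_i}\cap\overline{E_j}$, so that the quotient space $\PG(3n-1,q)/E_i$ and the dual space $\overline{E_i}$ become mutually dual $(2n-1)$-spaces carrying corresponding partial spreads. The paper's proof is terser and takes for granted the point you spell out explicitly (that a duality carries Desarguesian spreads to Desarguesian spreads, via the trace form), but the underlying argument is the same.
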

\begin{proof}  An element of $\S_{i}$, say $E_1/E_i$ equals $\langle E_1,E_i\rangle/E_i$. This space can be identified with $\langle E_1,E_i\rangle$ and its dual $\overline{\langle E_1,E_i\rangle}$, which equals $\overline{E_1}\cap \overline{E_i}$. This implies that the set $\{E_1,\ldots,E_{i-1},E_{i+1},\ldots,E_{q^n+1}\}/E_i$  extends to a Desarguesian spread of $\PG(2n-1,q)$ if and only if $\{\overline{E_1}\cap \overline{E_{i}},\ldots,\overline{E_{i-1}}\cap \overline{E_{i}}, \overline{E_{i+1}}\cap \overline{E_{i}}, \ldots, \overline{E_{q^n+1}}\cap \overline{E_i}\}$ extends to a Desarguesian spread. The same reasoning holds for pseudo-hyperovals.
\end{proof}
By abuse of notation, we say that an element $\overline{E_i}$ of a dual pseudo-hyperoval $\overline{\H}=\{\overline{E_1},\ldots,\overline{E_{q^n+2}}\}$ {\em induces} the spread $\overline{\S}_i:=\{\overline{E_j}\cap \overline{E_i}| j\neq i\}$. Then Lemma \ref{dualdes} states that $\S_i$ is Desarguesian if and only if $\overline{\S}_i$ is Desarguesian. Also, we write $\overline{\mathbf{S}_{n-1,2}}$ for the set of $(2n-1)$-spaces in $\PG(3n-1,q)$ that is obtained by dualising the system of $(n-1)$-spaces of $\mathbf{S}_{n-1,2}$. In the case that $n=3$, both systems have spaces of dimension $2$, so we dualise the system of planes that contains the elements $E_1,E_2,E_3,E_4$ used to define the Segre variety $\mathbf{S}_{2,2}$.

We know that the $(n-1)$-spaces of $\mathbf{S}_{n-1,2}$ correspond to the points of an $\F_{q}$-subplane $\pi$ of $\PG(2,q^n)$, and are exactly the elements of a Desarguesian spread meeting a fixed plane. By considering the field reduction of the lines of the $\F_q$-subplane $\pi$ we can also see that $\overline{\mathbf{S}_{n-1,2}}$ consists of $q^2+q+1$ $(2n-1)$-spaces in $\PG(3n-1,q)$ each meeting a fixed plane in a different line of this plane.

 Suppose now the dual pseudo-hyperoval $\overline{\H}$ has an element $\overline{E_1}$ such that $\overline{E_1}$ and $\overline{\H}$ satisfy the following properties:
\begin{itemize}
\item[(P2):] \begin{itemize}
\item[(i)] $\overline{E_1}$ induces a Desarguesian spread,
\item[(ii)] for any three elements $\overline{E_2},\overline{E_3},\overline{E_4}$ of $\overline{\H}\setminus \{\overline{E_1}\}$, the unique $\overline{\mathbf{S}_{n-1,2}}$ through $\overline{E_1},\overline{E_2},\overline{E_3}$ and $\overline{E_4}$ contains $q+2$ elements of $\overline{H}$. \end{itemize}
\end{itemize}



Note that in the following lemma, we do not require $n$ to be prime.
\begin{lemma} \label{lem}Let $\H$ be a pseudo-hyperoval in $\PG(3n-1,q)$, $q=2^h$, $h>1$. Assume that
\begin{itemize}\setlength{\itemsep}{-1pt}
\item the spread induced by a subset $\T$ of $q^n+1$ elements of $\H$ is Desarguesian,
\item $\overline{\H}$ satisfies Property (P2) for some element $\overline{E_1}$ of $\overline{\T}$,
\end{itemize} then the following statements hold:

\begin{itemize}
\item[(i)] the elation Laguerre plane $L(\overline{\O})$ where $\overline{\O}=\overline{\H}\backslash \{\overline{E_1}\}$ is isomorphic to
the Laguerre plane $(\P', \L', \C')$ embedded in $\pi$, with natural incidence, given by
\begin{itemize}\setlength{\itemsep}{-1pt}
\item[$\P'$:] the lines of $\pi$ different from $\ell_\infty$,
\item[$\L'$:] the points of $\ell_\infty$,
\item[$\C'$:] the $q^{2n}$ point-pencils of $\pi$ not containing $\ell_\infty$ and $q^{3n}-q^{2n}$ dual ovals such that $\ell_\infty$ extends all of them to a dual hyperoval,
\end{itemize}
where $\pi$ is the Desarguesian projective plane $\PG(2,q^n)$ obtained from the Andr\'e/Bruck-Bose construction obtained from the spread $\overline{\S_1}$ and $\ell_\infty$ is the line of $\pi$ corresponding to $\overline{E}_1$.
\item[(ii)] a dual oval $\overline{A}$ of the set $\C'$ is a dual conic with $\ell_\infty$ as its nucleus line.
\item[(iii)] $L(\overline{\O})$ is Miquelian.
\end{itemize}
\end{lemma}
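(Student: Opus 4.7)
The plan is to establish the three statements in sequence. For (i), I would embed $H_\infty = \PG(3n-1,q)$ as a hyperplane of $\PG(3n,q)$ and fix a $2n$-space $\Sigma$ of $\PG(3n,q)$ meeting $H_\infty$ in $\overline{E_1}$. Since $\overline{\S}_1$ is a Desarguesian spread of $\overline{E_1}$, the Andr\'e/Bruck-Bose construction applied to $\overline{\S}_1$ inside $\Sigma$ produces the plane $\pi = \PG(2,q^n)$, with $\ell_\infty$ corresponding to $\overline{E_1}$ and points of $\ell_\infty$ in bijection with the spread elements of $\overline{\S}_1$, hence with elements of $\overline{\O}$ via $\overline{E_j}\mapsto \overline{E_j}\cap \overline{E_1}$. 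The required isomorphism sends a Laguerre point $\rho$ to the line $\rho\cap \Sigma$ of $\pi$, a Laguerre line $\overline{E_j}$ to the point $\overline{E_j}\cap \overline{E_1}$, and a Laguerre circle $C$ to the set $\{\langle C,\overline{E_j}\rangle\cap \Sigma : \overline{E_j}\in \overline{\O}\}$. Elementary dimension counts show these maps are bijections preserving incidence. When $C\in \Sigma$ the image is the point-pencil at $C$; when $C\notin \Sigma$ the image is a set of $q^n+1$ lines meeting $\ell_\infty$ in distinct points. To see no three are concurrent at an affine point $P$, observe that a concurrence forces the line $CP$ to lie in each $\langle C,\overline{E_{j_i}}\rangle$, so that $CP\cap H_\infty \in \overline{E_{j_1}}\cap \overline{E_{j_2}}\cap \overline{E_{j_3}}$, contradicting the general-position property of the pseudo-hyperoval. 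Hence the image is a dual oval and $\ell_\infty$ extends it to a dual hyperoval.

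For (ii), fix $C\notin \Sigma$ and let $\overline{A}_C$ be the corresponding dual oval. By the dual form of Theorem \ref{Tim}, it suffices to show that for any three lines $L_1,L_2,L_3$ of $\overline{A}_C$, the unique $\F_q$-subplane $\pi'$ of $\pi$ through $L_1,L_2,L_3,\ell_\infty$ meets $\overline{A}_C$ in exactly $q+1$ lines. Let $\overline{E_{j_1}},\overline{E_{j_2}},\overline{E_{j_3}}$ be the elements of $\overline{\O}$ associated to $L_1,L_2,L_3$. Property (P2)(ii) yields the unique Segre variety $\overline{\mathbf{S}_{n-1,2}}$ through $\overline{E_1},\overline{E_{j_1}},\overline{E_{j_2}},\overline{E_{j_3}}$ containing $q+2$ elements of $\overline{\H}$; write these as $\overline{E_1}$ together with $\overline{E_j}$ for $j\in J$, $|J|=q+1$. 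I would then show that the $q+1$ lines $L_j := \langle C,\overline{E_j}\rangle\cap \Sigma$ for $j\in J$ all lie in $\pi'$. Since each point of $\ell_\infty$ supports exactly one line of $\overline{A}_C$ (the dual-hyperoval property in even characteristic), and conversely each line of $\overline{A}_C$ contained in $\pi'$ meets $\ell_\infty$ at a point of $\pi'\cap \ell_\infty$, this forces $|\pi'\cap \overline{A}_C|=q+1$. Applying Theorem \ref{Tim} to every triple then yields that $\overline{A}_C$ is a dual conic with nucleus line $\ell_\infty$.

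The hard part will be the inclusion $L_j\in \pi'$ for $j\in J$. My approach is first to verify that the $q+1$ distinct intersections $\overline{E_j}\cap \overline{E_1}$ (for $j\in J$) form a regulus of $\overline{\S}_1$, using that a Desarguesian spread is regular and exploiting the incidence pattern of $\overline{\mathbf{S}_{n-1,2}}$; via Bruck-Bose this regulus corresponds precisely to the $\F_q$-subline $\pi'\cap \ell_\infty$, which already contains the three points $L_i\cap \ell_\infty$. Second, I would interpret the family of $n$-spaces $\{\langle C,\overline{E}\rangle\cap \Sigma : \overline{E}\in \overline{\mathbf{S}_{n-1,2}}\}$ as producing the $q^2+q+1$ lines of an $\F_q$-subplane of $\pi$, exploiting the correspondence (recalled before Lemma \ref{vier}) between Segre varieties in $\PG(3n-1,q)$ and $\F_q$-subplanes of $\PG(2,q^n)$ obtained via field reduction, together with its compatibility with the cone over $C$; uniqueness of the $\F_q$-subplane through four lines in general position then forces this subplane to coincide with $\pi'$, and $L_j\in \pi'$ for all $j\in J$ follows. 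Finally, for (iii), combining (i) and (ii) identifies $L(\overline{\O})$ with the incidence structure whose circles are the $q^{2n}$ point-pencils at affine points of $\pi$ and $q^{3n}-q^{2n}$ dual conics with nucleus line $\ell_\infty$; this is precisely the dual of the plane model of the Miquelian Laguerre plane of order $q^n$ recorded in Section \ref{S3}, and hence $L(\overline{\O})$ is Miquelian.
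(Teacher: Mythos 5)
Your proposal is correct and follows essentially the same route as the paper: (i) via the Andr\'e/Bruck--Bose representation in a $2n$-space through $\overline{E_1}$, (ii) by projecting the Segre variety guaranteed by Property (P2) from the circle-point to obtain an $\F_q$-subplane meeting the dual oval in $q+1$ lines and then invoking Theorem \ref{Tim}, and (iii) by dualising to the plane model of the Miquelian Laguerre plane. The extra detail you flag as ``the hard part'' (identifying the projected Segre variety with the unique $\F_q$-subplane through $\ell_\infty,L_1,L_2,L_3$ via the regulus in $\overline{\S}_1$) is exactly the justification the paper gives, only stated more tersely there.
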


\begin{proof}
(i) Embed the space $\PG(3n-1,q)$, containing $\overline{\O}$, as a hyperplane $H_\infty$ in $\PG(3n,q)$. Recall that $L(\overline{\O})$ is the incidence structure $(\P,\L,\C)$, with natural incidence, embedded in $\PG(3n,q)$ as follows:
\begin{itemize}\setlength{\itemsep}{-1pt}
\item[$\P:$] the $2n$-spaces meeting $H_\infty$ in an element of $\overline{\O}$,
\item[$\L:$] the elements of $\overline{\O}$,
\item[$\C:$] the points of $\PG(3n,q)$ not contained in $H_\infty$ (the affine points).
\end{itemize}

Consider a $2n$-space $\Pi$ of $\PG(3n,q)$ intersecting $H_\infty$ in $\overline{E_1}$. The elements of $\overline{\O}$ intersect $\overline{E_1}$ in the Desarguesian spread $\overline{\S_1}$. It follows that the (projective) Andr\'e/Bruck-Bose construction in $\Pi$, using $\overline{\S_1}$, defines a Desarguesian projective plane $\pi\cong\PG(2,q^n)$. The elements of $\overline{\S_1}$ correspond to the points of a line $\ell_\infty$ of $\pi$.
By intersecting the elements of $L(\overline{\O})$ with $\Pi$, we find the representation $(\P',\L',\C')$ of the Laguerre plane $L(\overline{\O})$ in the Desarguesian plane $\pi$ as given in the statement. For this, we identify every circle of $\C$ with the $q^n+1$ elements of $\P$ it contains and consider their intersection with $\Pi$. Then, an affine point contained in $\Pi$ corresponds to a point-pencil of $\pi$ not containing $\ell_\infty$. An affine point not contained in $\Pi$ will also correspond to a set of $q^n+1$ lines of $\pi$, different from $\ell_\infty$. However, since such an affine point does not belong to $\Pi$, any three of these lines will have empty intersection, hence they form a dual oval. Moreover, these $q^n+1$ lines intersect the line $\ell_\infty$ all in a different point, therefore each dual oval extends uniquely to a dual hyperoval by adding the line $\ell_\infty$.
(ii) Consider the affine point $P$ of $\PG(3n,q)\backslash \Pi$ corresponding to $\overline{A}$. Consider three lines $\ell_1, \ell_2, \ell_3$ of $\overline{A}$. These correspond to three elements of $\overline{\H}$, say $\overline{E_2}, \overline{E_3}$ and $\overline{E_4}$. Now, since $\overline{\H}$ satisfies Property (P2), we find that the unique $\overline{\mathbf{S}_{n-1,2}}$, say $\B$, through the $4$ $(2n-1)$-spaces $\overline{E_1}$, $\overline{E_2}, \overline{E_3}$ and $\overline{E_4}$ contains $q+2$ elements of $\overline{\H}$. 

The element $\overline{E_1}$ is contained in $\B$, and the projection from $P$ of the $q^2+q$ $(2n-1)$-spaces of $\B$, different from $\overline{E_1}$, onto the space $\Pi$ (used in the Andr\'e/Bruck-Bose construction) corresponds to $q^{2}+q$ lines of the plane $\pi$. Every such projected line intersects $\ell_\infty$ in a point which corresponds to one of the $q+1$ elements of the unique regulus in $\overline{E_1}$ through $\overline{E_1}\cap \overline{E_2} $, $\overline{E_1}\cap \overline{E_3}$ and $\overline{E_1}\cap \overline{E_4}$. This implies that the set of $(2n-1)$-spaces $\B$ corresponds to the set of lines of an $\F_{q}$-subplane in the Desarguesian plane $\pi$, which contains $\ell_\infty, \ell_1, \ell_2, \ell_3$ and $q-2$ other lines of $\overline{A}$. Since this is true for every choice of three distinct lines $\ell_1,\ell_2,\ell_3$ of $\overline{A}$, by Theorem \ref{Tim}, $\overline{A}$ is a dual conic with $\ell_\infty$ as its nucleus line.

(iii) We consider the dual $(\P'', \L'', \C'')$ of the incidence structure $(\P',\L',\C')$ and use part (ii) which states that the dual ovals in $\C$ are dual conics. Also note that the dual of the Desarguesian plane $\pi$ is also Desarguesian. Let the point $N$ be the dual of the line $\ell_\infty$, then $(\P'',\L'',\C'')$ is given by
\begin{itemize}\setlength{\itemsep}{-1pt}
\item[$\P''$:] the points of $\PG(2,q^n)$ different from $N$,
\item[$\L''$:] the lines of $\PG(2,q^n)$ containing $N$,
\item[$\C''$:] the $q^{2n}$ lines of $\PG(2,q^n)$ not containing $N$ and the $q^{3n}-q^{2n}$ conics in $\PG(2,q^n)$ having $N$ as their nucleus.
\end{itemize}

This is just the standard plane model for a Miquelian Laguerre plane of even order $q^n$.
\end{proof}

\subsection{The proof of the main theorem}\label{conclusion}
We will first prove a lemma which gives a connection between Properties (P1) and (P2).

\begin{lemma}\label{P} Let $\H$ be a pseudo-hyperoval in $\PG(3n-1,q)$, $q=2^h$, $h>1$, such that there is a subset $\O$ of $q^n+1$ elements of $\H$ inducing a Desarguesian spread. If $\H$ does not satisfy Property (P1), then $\overline{\H}$ satisfies (P2) for every element of $\overline{\O}$.
\end{lemma}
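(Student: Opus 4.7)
The plan is to verify (P2)(i) and (P2)(ii) in turn, with the second being the substantive step. Part (i) is immediate: since $E_1 \in \O$, the induced spread $\S_1$ is Desarguesian by the hypothesis on $\O$, and Lemma \ref{dualdes} transfers Desarguesian-ness of an induced spread directly between an element and its dual, so $\overline{E_1}$ induces a Desarguesian spread.

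For (P2)(ii), the idea is a very short counting trick. Fix any $\overline{E_2}, \overline{E_3}, \overline{E_4} \in \overline{\H} \setminus \{\overline{E_1}\}$ and pass to the original setting via duality. Because $|\H| = q^n + 2$ while $|\O| = q^n + 1$, the complement $\H \setminus \O$ has exactly one element; hence among $E_2, E_3, E_4$ at most one fails to lie in $\O$, i.e.\ at least two of them belong to $\O$. By Lemma \ref{vier}, the $\mathbf{S}_{n-1,2}$ through $E_1, E_2, E_3, E_4$ depends only on the unordered four-element set, so I am free to relabel so that $E_1, E_2, E_3 \in \O$. In that labeling all three of $\S_1, \S_2, \S_3$ are Desarguesian, which is exactly clause (i) of Property (P1).

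The assumption that $\H$ does not satisfy (P1) now forces clause (ii) of (P1) to fail for this quadruple, i.e.\ the unique $\mathbf{S}_{n-1,2}$ through $E_1, E_2, E_3, E_4$ contains $q+2$ elements of $\H$. Passing back to the dual, by the very definition of $\overline{\mathbf{S}_{n-1,2}}$ the map $E \mapsto \overline{E}$ identifies the $(n-1)$-system of $\mathbf{S}_{n-1,2}$ with $\overline{\mathbf{S}_{n-1,2}}$, so those $q+2$ elements of $\H$ translate into $q+2$ elements of $\overline{\H}$ contained in $\overline{\mathbf{S}_{n-1,2}}$, which is precisely (P2)(ii). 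I do not anticipate a genuine obstacle here; the combinatorial crux is simply the pigeon-hole observation that any three elements of $\H \setminus \{E_1\}$ contain at least two from $\O$, and the remainder is routine bookkeeping on the correspondence between $\mathbf{S}_{n-1,2}$ and $\overline{\mathbf{S}_{n-1,2}}$.
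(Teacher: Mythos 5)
Your proof is correct and follows essentially the same route as the paper: negate the existential statement (P1), observe that clause (i) of (P1) is automatic for any quadruple because all but one element of $\H$ induces a Desarguesian spread, conclude that (P1)(ii) must fail for every quadruple, and dualise. In fact you spell out the pigeon-hole relabelling step that the paper's proof dismisses with ``clearly'', so your write-up is, if anything, slightly more complete.
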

\begin{proof}
If the hyperoval $\H$ does not satisfy Property (P1), then clearly, it does not satisfy Property (P1)(ii). So for every $4$ elements $E_i$, $i=1,\ldots,4$ of $\H$, the unique $\S_{n-1,2}$ through $E_i$, $i=1,\ldots,4$ contains $q+2$ elements of $\H$. This implies that the unique $\overline{\mathbf{S}_{n-1,2}}$ through $\overline{E_i}$, $i=1,\ldots,4$ contains $q+2$ elements of $\overline{\H}$, so $\overline{\H}$ satisfies Property (P2) for all elements of $\overline{\O}$.
\end{proof}
\begin{theorem} \label{hoofd2} If  $\O$ is a pseudo-oval in $\PG(3n-1,q)$, $q=2^h$, $h>1$, $n$ prime, such that the spread induced by every element of $\O$ is Desarguesian, then $\O$ is elementary.
\end{theorem}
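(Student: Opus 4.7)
The plan is to combine the three results of this section---Theorem \ref{case1}, Lemma \ref{lem}, and Lemma \ref{P}---via a case split on whether the pseudo-hyperoval extending $\O$ satisfies Property (P1). By Theorem \ref{Thas PHO}, $\O$ extends uniquely to a pseudo-hyperoval $\H$ in $\PG(3n-1,q)$, and I will argue that $\H$ itself is elementary; the conclusion for $\O$ then follows at once since $\O\subset\H$ sits inside the same Desarguesian spread.

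If $\H$ satisfies (P1), then Theorem \ref{case1} applies directly and $\H$ is elementary, which concludes this branch.

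If $\H$ does not satisfy (P1), then since by hypothesis every element of $\O$ induces a Desarguesian spread, the premises of Lemma \ref{P} are met with the subset $\O$ of size $q^n+1$. That lemma gives that $\overline{\H}$ satisfies Property (P2) with respect to every element of $\overline{\O}$. Fix any $\overline{E_1}\in\overline{\O}$; by Lemma \ref{dualdes}, $\overline{E_1}$ also induces a Desarguesian spread in the dual, so both bullet-point hypotheses of Lemma \ref{lem} are satisfied by taking the lemma's $\overline{\T}$ to be $\overline{\O}$. Part (iii) of Lemma \ref{lem} then yields that the elation Laguerre plane $L(\overline{\H}\setminus\{\overline{E_1}\})$ is Miquelian. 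Applying Corollary \ref{lem2} to $\overline{\H}$ with the distinguished element $\overline{E_1}$, I obtain that $\H$ is a pseudo-hyperconic, hence elementary, and therefore $\O$ is elementary.

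The genuinely hard work has already been done inside the two case lemmas: Theorem \ref{case1} handles the ``generic'' four-element configuration using the Desarguesian-spread-extension Lemma \ref{UniqueDesSpread} together with the rigidity Corollary \ref{regpluseen}, while Lemma \ref{lem} reinterprets the derived plane as the plane model of a Miquelian Laguerre plane and invokes Theorem \ref{Tim} to identify the dual ovals as dual conics. The assembly at this stage is therefore routine; the only conceptual point to verify is that ``not-(P1)'' together with the Desarguesian hypothesis forces the dual-side property (P2), which is precisely the content of Lemma \ref{P}, and that the primality of $n$ (needed only inside the case lemmas) has already been absorbed there.
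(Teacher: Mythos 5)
Your proposal is correct and follows essentially the same route as the paper: extend $\O$ to the unique pseudo-hyperoval $\H$ via Theorem \ref{Thas PHO}, split on Property (P1), use Theorem \ref{case1} in the first case, and chain Lemma \ref{P}, Lemma \ref{lem}(iii) and Corollary \ref{lem2} in the second. The paper only adds the (inessential for the proof) observation that the second case can in fact occur only when $q=4$.
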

\begin{proof}By Theorem \ref{Thas PHO}, we may consider the unique pseudo-hyperoval $\H$ extending $\O$. Clearly, $\H$ satisfies the conditions of Lemma \ref{P}. This implies that either $\H$ satisfies Property (P1), and then the statement follows from Theorem \ref{case1} (and the fact that a subset of an elementary set is elementary), or $\overline{\H}$ satisfies Property (P2) for every element of $\overline{\O}$.

By Lemma \ref{lem}, $L(\overline{\O})$ is Miquelian, and by Lemma \ref{lem2}, $\H$ is a pseudo-hyperconic with $E$ corresponding to the nucleus $N$ of a conic $\C$ (hence $\O$ is elementary). Note that only for $q=4$ this possibility can occur, since it is impossible that the set $\C\cup \{N\}\setminus \{P\}$, where $P$ is a point of $\C$ is again a conic, if $q>4$.
\end{proof}
As a corollary, we state a similar statement for pseudo-hyperovals.
\begin{corollary} Let $\H$ be a pseudo-hyperoval in $\PG(3n-1,q)$, $q=2^h$, $h>1$, $n$ prime, such that the spread induced by $q^n+1$ elements of $\H$ is Desarguesian, then $\H$ is elementary.
\end{corollary}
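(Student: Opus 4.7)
The plan is to reduce this corollary to the Main Theorem (Theorem \ref{hoofd2}). Let $E$ be an element of $\H$ whose induced spread is not assumed to be Desarguesian, and set $\O := \H \setminus \{E\}$. The set $\O$ consists of $q^n+1$ elements, and since $\H$ is a pseudo-arc, any three of them span $\PG(3n-1,q)$, so $\O$ is a pseudo-oval.

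The key step is to verify that every element of $\O$ induces a Desarguesian spread in the sense required by Theorem \ref{hoofd2}. For $E_i \in \O$, the partial spread of size $q^n$ induced by $E_i$ in $\O$ is precisely the spread of size $q^n+1$ induced by $E_i$ in $\H$, with the projection of $E$ modulo $E_i$ removed. By hypothesis, the latter spread is Desarguesian, and by the Beutelspacher extension result recalled in the introduction, a partial spread of size $q^n$ in $\PG(2n-1,q)$ admits a unique extension to a full spread. Therefore, this unique extension must coincide with the Desarguesian spread induced in $\H$, so the spread induced by $E_i$ in $\O$ is Desarguesian.

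Theorem \ref{hoofd2} then applies and yields that $\O$ is elementary, arising by field reduction from some oval $\mathcal{A}$ in $\PG(2,q^n)$. Since $q^n$ is even, $\mathcal{A}$ extends uniquely to a hyperoval by adjoining its nucleus $N$, and applying field reduction to this hyperoval produces an elementary pseudo-hyperoval containing $\O$. By the uniqueness part of Theorem \ref{Thas PHO}, the pseudo-hyperoval extending $\O$ is unique, so this elementary pseudo-hyperoval must coincide with $\H$, proving that $\H$ is elementary. The only real subtlety is the bookkeeping needed to distinguish carefully between the two meanings of ``induced spread'' (the partial spread of size $q^n$ that a pseudo-oval element determines versus the honest spread of size $q^n+1$ that a pseudo-hyperoval element determines); once this is handled, the argument is essentially mechanical, and the distinguished element $E$ never needs to be examined directly.
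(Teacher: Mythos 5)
Your proposal is correct and follows essentially the same route as the paper: apply the Main Theorem to the pseudo-oval $\O=\H\setminus\{E\}$ formed by the $q^n+1$ distinguished elements, then invoke the uniqueness of the hyperoval/pseudo-hyperoval extension to conclude that $\H$ is the field reduction of $A\cup\{N\}$. Your explicit check that the size-$q^n$ partial spreads induced within $\O$ extend (uniquely, via Beutelspacher) to the Desarguesian spreads induced within $\H$ is a point the paper leaves implicit, but it is the same argument.
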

\begin{proof} The subset $\O$ of elements inducing a Desarguesian spread is an elementary pseudo-oval by Theorem \ref{hoofd2}, suppose $\O$ is the field reduced oval $A$. There is a unique element extending $\O$ to a pseudo-hyperoval, so $\H\setminus \O$ must be the element corresponding the unique point of $\PG(2,q^n)$ extending $A$ to a hyperoval.
\end{proof}


\subsection{The consequence of the main theorem for Laguerre planes}
\begin{lemma}\label{derivation}
A point $P$ of an elation Laguerre plane $\Ll=L(\overline{\O})$, where $\overline{\O}$ is a dual pseudo-oval in $\PG(3n-1,q)$, admits a Desarguesian derivation if and only if the spread $\S$, induced by the line of $L(\overline{\O})$ through $P$ is Desarguesian.
\end{lemma}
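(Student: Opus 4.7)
The plan is to identify the derived affine plane at $P$ with (the affine part of) the dual of the Andr\'e/Bruck-Bose plane built from $\S$, so that the statement reduces to the standard Bruck-Bose fact that the translation plane is Desarguesian precisely when its spread is.

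In the model $L(\overline{\O})$ the point $P$ is a $2n$-space of $\PG(3n,q)$ meeting $H_\infty$ in an element $\overline{E}\in\overline{\O}$; the derived points are the other $2n$-spaces $Q$ meeting $H_\infty$ in some $\overline{E'}\in\overline{\O}\setminus\{\overline{E}\}$, the lines of type (1) are the elements of $\overline{\O}\setminus\{\overline{E}\}$, and the lines of type (2) are the affine points of $P$. By Lemma \ref{dualdes} the spread $\S$ lives in $\overline{E}\cong\PG(2n-1,q)$ and consists of the $q^n$ intersections $\overline{E'}\cap\overline{E}$ together with one ``extra'' element $F_0$ coming from Beutelspacher's extension. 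Viewing $P\cong\PG(2n,q)$ with $\overline{E}$ as the hyperplane at infinity, I apply Andr\'e/Bruck-Bose to $\S$ to obtain a projective translation plane $\pi=\A(\S)\cup\ell_\infty$ of order $q^n$, and write $p_0\in\ell_\infty$ for the point corresponding to $F_0$ and $p_{\overline{E'}}\in\ell_\infty$ for the point corresponding to $\overline{E'}\cap\overline{E}$.

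The main step is to build a duality $\Phi$ from the derived plane to the affine plane $\pi^*\setminus\overline{p_0}$ (the dual projective plane $\pi^*$ with the line $\overline{p_0}$ dual to $p_0$ removed), sending $Q\mapsto L_Q:=Q\cap P$ on derived points, the type (1) line $\overline{E'}$ to the line of $\pi^*$ dual to $p_{\overline{E'}}$, and the type (2) line $X$ to the line of $\pi^*$ dual to $X$. A short dimension count using the pairwise intersection property of $\overline{\O}$ gives $\dim L_Q=n$ and $L_Q\cap\overline{E}=\overline{E'}\cap\overline{E}$, so $L_Q$ is an affine line of $\pi$ through $p_{\overline{E'}}\neq p_0$; injectivity and surjectivity of $Q\mapsto L_Q$ onto the lines of $\pi$ not through $p_0$ follow by reconstructing $Q=\langle L_Q,\overline{E'}\rangle$. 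The incidence verifications are routine: $Q$ lies on $\overline{E'}$ iff $L_Q$ passes through $p_{\overline{E'}}$, and $Q$ lies on $X$ iff $X\in L_Q$; under dualisation these become exactly the incidences of $\pi^*\setminus\overline{p_0}$.

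With $\Phi$ established, the conclusion is immediate: the affine plane $\pi^*\setminus\overline{p_0}$ is Desarguesian iff $\pi^*$ is Desarguesian, iff $\pi$ is Desarguesian (by self-duality of $\PG(2,q^n)$), iff $\S$ is Desarguesian (by Bruck-Bose). The main point requiring care is not difficulty but bookkeeping: one must match the ``extra'' spread element $F_0$ (the one not of the form $\overline{E'}\cap\overline{E}$) with the distinguished point $p_0$ that is missed by $Q\mapsto L_Q$, and it is this matching that forces the specific line $\overline{p_0}$ to be removed from $\pi^*$.
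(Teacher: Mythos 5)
Your proof is correct and takes essentially the same route as the paper: both identify the derived affine plane at $P$ with (the affine part of) the dual of the Andr\'e/Bruck--Bose plane constructed from $\S$ inside the $2n$-space $P$, and then conclude via the standard fact that this plane is Desarguesian if and only if $\S$ is. You simply make explicit (via $Q\mapsto Q\cap P$ and the bookkeeping around the extra spread element) the duality that the paper asserts in a single sentence.
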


\begin{proof} Let $P$ be a point of $\Ll$, then $P$ is a $2n$-space through an element $E$ of $\overline{\O}$. The derived affine plane of order $q^{n}$ at the point $P$ of $\Ll$ consists of points $\P'$ and lines $\L'$ obtained as follows:
\begin{itemize}
\item[$\P':$] $2n$-spaces in $\PG(3n,q)$, not in $H_\infty$, through an element of $\overline{\O}\setminus\{E\}$,
\item[$\L':$] points in $P$ not in $H_\infty$, together with the elements of $\overline{\O}\setminus E$.
\end{itemize}
Now this affine plane clearly extends to a projective plane of order $q^n$ by adding the $q^n+1$ elements of $\S$ as points and the space $E$ as line at infinity. This projective plane is the dual of the plane obtained from the (projective) Andr\'e/Bruck-Bose construction starting from $\S$ and hence, is Desarguesian if and only if $\S$ is Desarguesian.
\end{proof}

If $\Ll$ is a Laguerre plane of odd order, then the main theorem of Chen and Kaerlein \cite{Chen} states that the existence of one point admitting a Desarguesian derivation forces $\Ll$ to be Miquelian. The following theorem which is a consequence of our main theorem gives a (much) weaker result in the case of even order Laguerre planes.

\begin{theorem} Let $\Ll$ be a Laguerre plane of order $q^n$ with kernel $K$, $|K|\geq q^{3n}(q-1)$, $n$ prime, $q>2$ even. Suppose that for every line of $\Ll$, there exists a point on that line that admits a Desarguesian derivation, then $\Ll$ is ovoidal and $|K|=q^{3n}(q^n-1)$.
\end{theorem}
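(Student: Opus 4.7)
The strategy is to reduce the statement to the Main Theorem of this paper. First I would use the kernel hypothesis $|K|\geq q^{3n}(q-1)$ together with Theorem \ref{Steinke} to realise $\Ll$ as $L(\overline{\O})$ for some dual pseudo-oval $\overline{\O}$ in $\PG(3n-1,q)$; the size of the kernel is precisely what makes the ``more explicit'' form of Steinke's theorem applicable. Write $\O$ for the pseudo-oval obtained by dualising every element of $\overline{\O}$ inside $\PG(3n-1,q)$. The aim becomes to prove that $\O$ is elementary, since then $\overline{\O}$ is an elementary dual pseudo-oval and Theorem \ref{hoofd} immediately forces $\Ll$ to be ovoidal.

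Next I would translate the hypothesis on Desarguesian derivations into a spread condition on $\O$. By Lemma \ref{derivation}, a point $P$ of $\Ll=L(\overline{\O})$ admits a Desarguesian derivation if and only if the spread induced by the line of $\Ll$ through $P$ is Desarguesian. This induced spread depends only on the line, not on the chosen point of it, so the hypothesis that every line of $\Ll$ carries at least one point with a Desarguesian derivation is equivalent to the statement that every element of $\overline{\O}$ induces a Desarguesian spread. Lemma \ref{dualdes} then transports this property to the dual side: every element of $\O$ induces a Desarguesian spread.

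At this point the Main Theorem (Theorem \ref{hoofd2}) applies directly, as $q=2^h$ with $h>1$ and $n$ is prime, and it yields that $\O$ is elementary. Consequently $\overline{\O}$ is an elementary dual pseudo-oval, Theorem \ref{hoofd} gives that $\Ll$ is ovoidal, and Lemma \ref{St} then upgrades the kernel inequality to the equality $|K|=q^{3n}(q^n-1)$. The whole proof is a short chain of invocations of the machinery set up earlier in the paper; no new geometric argument is required. The only point that demands any real care is the bookkeeping of the duality between a pseudo-oval and its associated dual pseudo-oval, which must be done precisely so that the derivation hypothesis and the spread hypothesis of Theorem \ref{hoofd2} are actually matched through Lemmas \ref{derivation} and \ref{dualdes}.
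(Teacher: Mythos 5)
Your proposal is correct and follows essentially the same chain as the paper: kernel size plus Steinke's theorem to get $\Ll\cong L(\overline{\O})$, Lemma \ref{derivation} to turn the derivation hypothesis into a Desarguesian-spread hypothesis on every element, the Main Theorem to conclude elementarity, Theorem \ref{hoofd} for ovoidality, and Lemma \ref{St} for the kernel equality. The only (harmless) difference is that you explicitly invoke Lemma \ref{dualdes} to pass between $\overline{\O}$ and $\O$ before applying Theorem \ref{hoofd2}, whereas the paper applies it directly to $\overline{\O}$; your version is, if anything, slightly more careful about the duality bookkeeping.
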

\begin{proof} From the hypothesis on the size of $K$ and Lemma \ref{St}, we find that $q^{3n}$ divides the order of $T$, hence, by \cite[Theorem 2]{Steinke} $\Ll$ is an elation Laguerre plane. By Theorem \ref{Steinke} $\Ll$ can be constructed from a dual pseudo-oval $\overline{\O}$ in $\PG(3n-1,q)$, $n$ prime. From Lemma \ref{derivation}, we obtain that for every element of $\overline{\O}$ the induced spread is Desarguesian. By Theorem \ref{hoofd2}, $\overline{\O}$ is elementary. By Theorem \ref{hoofd} this implies that $\Ll$ is ovoidal. Finally, this implies by Lemma \ref{St} that $|K|=q^{3n}(q^n-1)$.
\end{proof}

{\bf Acknowledgment.} The authors thank Tim Penttila for sharing Theorem \ref{Tim} with us, and John Sheekey for his help with the proof of Theorem \ref{moeilijk}.


\begin{thebibliography}{99}
\bibitem{Andre} { J. Andr\'e.} \"Uber nicht-Desarguessche Ebenen mit transitiver Translationsgruppe. {\em Math. Z.} {\bf 60}  (1954), 156--186.

\bibitem{LaguerrePlane}W. Benz. {\em Vorlesungen \"uber Geometrie der Algebren. Grundlehren der mathematischen Wissenschaften, {\bf 197}.} Springer-Verlag Berlin Heidelberg New York 1973.
\bibitem{Beutelspacher} A. Beutelspacher. Blocking sets and partial spreads in finite projective spaces. {\em Geom. Dedicata} {\bf 9 (4)} (1980), 425--449.
\bibitem{Br} R.H. Bruck and R.C. Bose. The construction of translation planes from projective spaces.  {\em J. Algebra}  {\bf 1}  (1964), 85--102.
\bibitem{Casse} L.R.A. Casse, J.A. Thas and P.R. Wild. $(q^n+1)$-sets of ${\rm PG}(3n-1,q)$, generalized quadrangles and Laguerre planes.
{\em Simon Stevin} {\bf 59 (1)} (1985), 21--42.
\bibitem{Chen} Y. Chen and G. Kaerlein. Eine Bemerkung \"uber endliche Laguerre- und Minkowski-Ebenen.
{\em Geom. Dedicata} {\bf 2} (1973), 193--194.
\bibitem{corrado} M. Lavrauw and C. Zanella. Segre embeddings and finite semifields. {\em Finite Fields Appl.} {\bf 25} (2014), 8--18.
\bibitem{FQ11}M. Lavrauw and G. Van de Voorde. Field reduction in finite geometry. To appear in {\em AMS Contemp. Math, American. Math Soc.}. {\em arxiv.org/abs/1310.8522.}
\bibitem{PenttilaInversivePlanes} T. Penttila. Inversive planes of order a power of four. {\em Preprint}.
\bibitem{Penttila} T. Penttila and G. Van de Voorde. Extending pseudo-arcs in odd characteristic. {\em Finite Fields Appl.} {\bf 22} (2013), 101--113.
\bibitem{segre} B. Segre. Sulle ovali nei piani lineari finiti. {\em Atti Accad. Naz. Lincei. Rend. Cl. Sci. Fis. Mat. Nat. (8)}  {\bf 17}  (1954), 141--142.
\bibitem{Steinke} G.F. Steinke. On the structure of finite elation Laguerre planes. {\em J. Geom.} {\bf 41 (1--2)} (1991), 162--179.
\bibitem{Thas PHO} J.A. Thas. The m-dimensional projective space $S_m(M_n(GF(q)))$ over the
total matrix algebra $M_n(GF(q))$ of the $n\times n$-matrices with elements in
the Galois field $GF(q)$. {\em Rend. Mat. (6)} {\bf 4} (1971), 459--532.
\bibitem{TGQ}J.A. Thas, K. Thas and H. Van Maldeghem. {\em Translation generalized quadrangles}. Series in Pure Mathematics {\bf 26}. World Scientific Publishing Co. Pte. Ltd., Hackensack, NJ, 2006.
\bibitem{vs} B.L. van der Waerden and L.J. Smid. Eine axiomatik der kreisgeometrie und der laguerregeometrie. {\em Math. Ann.} {\bf 110 (1)} (1935), 753--776.
\end{thebibliography}
\end{document}